\def\change#1{#1}
\begin{document}

\newcommand{\cG}{\cal G}

\newcommand{\cC}{\cal C}

\newcommand{\cB}{\cal B}

\newcommand{\cU}{\cal U}

\newcommand{\cS}{\cal S}

\newcommand{\cP}{\cal P}

\newcommand{\cE}{\cal E}

\newcommand{\cF}{\cal F}

\newcommand{\cD}{\cal D}

\newcommand{\cM}{\cal M}

\newcommand{\ra}{\rightarrow}

\newcommand{\la}{\leftarrow}

\newcommand{\Lra}{\Leftrightarrow}

\newcommand{\Ra}{\Rightarrow}

\newcommand{\BQ}{\Bbb Q}

\newcommand{\BZ}{\Bbb Z}

\newcommand{\BC}{\Bbb C}

\newcommand{\BR}{\Bbb R}

\newcommand{\BN}{\Bbb N}

\newcommand{\df}{\displaystyle \frac}

\newcommand{\lang}{\langle}

\newcommand{\rang}{\rangle}

\newcommand{\ions}{\rm{\scriptstyle ions}}

\newcommand{\ion}{\rm{\scriptstyle ion}}

\newcommand{\dip}{\rm{\scriptstyle dip}}

\newcommand{\pol}{\rm{\scriptstyle pol}}

\newcommand{\mg}{\rm{\scriptstyle mag}}

\newcommand{\free}{\rm{\scriptstyle free}}

\newcommand{\be}{\begin{equation}}

\newcommand{\ee}{\end{equation}}

\newcommand{\bma}{\mbox{\boldmath{$a$}}}

\newcommand{\bmb}{\mbox{\boldmath{$b$}}}

\newcommand{\bmc}{\mbox{\boldmath{$c$}}}

\newcommand{\bmd}{\mbox{\boldmath{$d$}}}

\newcommand{\bme}{\mbox{\boldmath{$e$}}}

\newcommand{\bmf}{\mbox{\boldmath{$f$}}}

\newcommand{\bmg}{\mbox{\boldmath{$g$}}}

\newcommand{\bmh}{\mbox{\boldmath{$h$}}}

\newcommand{\bmi}{\mbox{\boldmath{$i$}}}

\newcommand{\bmj}{\mbox{\boldmath{$j$}}}

\newcommand{\bmk}{\mbox{\boldmath{$k$}}}

\newcommand{\bml}{\mbox{\boldmath{$l$}}}

\newcommand{\bmm}{\mbox{\boldmath{$m$}}}

\newcommand{\bmn}{\mbox{\boldmath{$n$}}}

\newcommand{\bmo}{\mbox{\boldmath{$o$}}}

\newcommand{\bmp}{\mbox{\boldmath{$p$}}}

\newcommand{\bmq}{\mbox{\boldmath{$q$}}}

\newcommand{\bmr}{\mbox{\boldmath{$r$}}}

\newcommand{\bms}{\mbox{\boldmath{$s$}}}

\newcommand{\bmt}{\mbox{\boldmath{$t$}}}

\newcommand{\bmu}{\mbox{\boldmath{$u$}}}

\newcommand{\bmv}{\mbox{\boldmath{$v$}}}

\newcommand{\bmw}{\mbox{\boldmath{$w$}}}

\newcommand{\bmx}{\mbox{\boldmath{$x$}}}

\newcommand{\bmy}{\mbox{\boldmath{$y$}}}

\newcommand{\bmz}{\mbox{\boldmath{$z$}}}

\newcommand{\bmA}{\mbox{\boldmath{$A$}}}

\newcommand{\bmB}{\mbox{\boldmath{$B$}}}

\newcommand{\bmC}{\mbox{\boldmath{$C$}}}

\newcommand{\bmD}{\mbox{\boldmath{$D$}}}

\newcommand{\bmE}{\mbox{\boldmath{$E$}}}

\newcommand{\bmF}{\mbox{\boldmath{$F$}}}

\newcommand{\bmG}{\mbox{\boldmath{$G$}}}

\newcommand{\bmH}{\mbox{\boldmath{$H$}}}

\newcommand{\bmI}{\mbox{\boldmath{$I$}}}

\newcommand{\bmJ}{\mbox{\boldmath{$J$}}}

\newcommand{\bmK}{\mbox{\boldmath{$K$}}}

\newcommand{\bmL}{\mbox{\boldmath{$L$}}}

\newcommand{\bmM}{\mbox{\boldmath{$M$}}}

\newcommand{\bmN}{\mbox{\boldmath{$N$}}}

\newcommand{\bmO}{\mbox{\boldmath{$O$}}}

\newcommand{\bmP}{\mbox{\boldmath{$P$}}}

\newcommand{\bmQ}{\mbox{\boldmath{$Q$}}}

\newcommand{\bmR}{\mbox{\boldmath{$R$}}}

\newcommand{\bmS}{\mbox{\boldmath{$S$}}}

\newcommand{\bmT}{\mbox{\boldmath{$T$}}}

\newcommand{\bmU}{\mbox{\boldmath{$U$}}}

\newcommand{\bmV}{\mbox{\boldmath{$V$}}}

\newcommand{\bmW}{\mbox{\boldmath{$W$}}}

\newcommand{\bmX}{\mbox{\boldmath{$X$}}}

\newcommand{\bmY}{\mbox{\boldmath{$Y$}}}

\newcommand{\bmZ}{\mbox{\boldmath{$Z$}}}

\newcommand{\bmchi}{\mbox{\boldmath{$\chi$}}}

\newcommand{\pp}{\prime\prime}

\newcommand{\bc}{\begin{center}}

\newcommand{\ec}{\end{center}}

\newcommand{\bmcalM}{\mbox{\boldmath{${\cal M}$}}}

\newcommand{\ppp}{\prime\prime\prime}

\newcommand{\aup}{\uparrow}

\newcommand{\adn}{\downarrow}

\newcommand{\sbma}{\mbox{\scriptsize{\boldmath{$a$}}}}

\newcommand{\sbmb}{\mbox{\scriptsize{\boldmath{$b$}}}}

\newcommand{\sbmc}{\mbox{\scriptsize{\boldmath{$c$}}}}

\newcommand{\sbmd}{\mbox{\scriptsize{\boldmath{$d$}}}}

\newcommand{\sbme}{\mbox{\scriptsize{\boldmath{$e$}}}}

\newcommand{\sbmf}{\mbox{\scriptsize{\boldmath{$f$}}}}

\newcommand{\sbmg}{\mbox{\scriptsize{\boldmath{$g$}}}}

\newcommand{\sbmh}{\mbox{\scriptsize{\boldmath{$h$}}}}

\newcommand{\sbmi}{\mbox{\scriptsize{\boldmath{$i$}}}}

\newcommand{\sbmj}{\mbox{\scriptsize{\boldmath{$j$}}}}

\newcommand{\sbmk}{\mbox{\scriptsize{\boldmath{$k$}}}}

\newcommand{\sbml}{\mbox{\scriptsize{\boldmath{$l$}}}}

\newcommand{\sbmm}{\mbox{\scriptsize{\boldmath{$m$}}}}

\newcommand{\sbmn}{\mbox{\scriptsize{\boldmath{$n$}}}}

\newcommand{\sbmo}{\mbox{\scriptsize{\boldmath{$o$}}}}

\newcommand{\sbmp}{\mbox{\scriptsize{\boldmath{$p$}}}}

\newcommand{\sbmq}{\mbox{\scriptsize{\boldmath{$q$}}}}

\newcommand{\sbmr}{\mbox{\scriptsize{\boldmath{$r$}}}}

\newcommand{\sbms}{\mbox{\scriptsize{\boldmath{$s$}}}}

\newcommand{\sbmt}{\mbox{\scriptsize{\boldmath{$t$}}}}

\newcommand{\sbmu}{\mbox{\scriptsize{\boldmath{$u$}}}}

\newcommand{\sbmv}{\mbox{\scriptsize{\boldmath{$v$}}}}

\newcommand{\sbmx}{\mbox{\scriptsize{\boldmath{$x$}}}}

\newcommand{\sbmy}{\mbox{\scriptsize{\boldmath{$y$}}}}

\newcommand{\sbmz}{\mbox{\scriptsize{\boldmath{$z$}}}}

\newcommand{\sbmA}{\mbox{\scriptsize{\boldmath{$A$}}}}

\newcommand{\sbmB}{\mbox{\scriptsize{\boldmath{$B$}}}}

\newcommand{\sbmC}{\mbox{\scriptsize{\boldmath{$C$}}}}

\newcommand{\sbmD}{\mbox{\scriptsize{\boldmath{$D$}}}}

\newcommand{\sbmE}{\mbox{\scriptsize{\boldmath{$E$}}}}

\newcommand{\sbmF}{\mbox{\scriptsize{\boldmath{$F$}}}}

\newcommand{\sbmG}{\mbox{\scriptsize{\boldmath{$G$}}}}

\newcommand{\sbmH}{\mbox{\scriptsize{\boldmath{$H$}}}}

\newcommand{\sbmI}{\mbox{\scriptsize{\boldmath{$I$}}}}

\newcommand{\sbmJ}{\mbox{\scriptsize{\boldmath{$J$}}}}

\newcommand{\sbmK}{\mbox{\scriptsize{\boldmath{$K$}}}}

\newcommand{\sbmL}{\mbox{\scriptsize{\boldmath{$L$}}}}

\newcommand{\sbmM}{\mbox{\scriptsize{\boldmath{$M$}}}}

\newcommand{\sbmN}{\mbox{\scriptsize{\boldmath{$N$}}}}

\newcommand{\sbmO}{\mbox{\scriptsize{\boldmath{$O$}}}}

\newcommand{\sbmP}{\mbox{\scriptsize{\boldmath{$P$}}}}

\newcommand{\sbmQ}{\mbox{\scriptsize{\boldmath{$Q$}}}}

\newcommand{\sbmR}{\mbox{\scriptsize{\boldmath{$R$}}}}

\newcommand{\sbmS}{\mbox{\scriptsize{\boldmath{$S$}}}}

\newcommand{\sbmT}{\mbox{\scriptsize{\boldmath{$T$}}}}

\newcommand{\sbmU}{\mbox{\scriptsize{\boldmath{$U$}}}}

\newcommand{\sbmV}{\mbox{\scriptsize{\boldmath{$V$}}}}

\newcommand{\sbmW}{\mbox{\scriptsize{\boldmath{$W$}}}}

\newcommand{\sbmX}{\mbox{\scriptsize{\boldmath{$X$}}}}

\newcommand{\sbmY}{\mbox{\scriptsize{\boldmath{$Y$}}}}

\newcommand{\sbmZ}{\mbox{\scriptsize{\boldmath{$Z$}}}}

\newcommand{\bmnabla}{\mbox{\boldmath{$\nabla$}}}

\newcommand{\bmalpha}{\mbox{\boldmath{$\alpha$}}}

\newcommand{\bmone}{\mbox{\boldmath{$1$}}}

\newcommand{\bmsigma}{\mbox{\boldmath{$\sigma$}}}

\newcommand{\bmmu}{\mbox{\boldmath{$\mu$}}}

\newcommand{\bmell}{\mbox{\boldmath{$\ell$}}}

\newcommand{\bmGamma}{\mbox{\boldmath{$\Gamma$}}}

\newcommand{\La}{Leftarrow}

\newcommand{\raAB}{\overset{\ra}{AB}}

\newcommand{\raAA}{\overset{\ra}{AA}}

\newcommand{\lrda}{\longleftrightarrow \hspace*{-.25in}\longleftrightarrow}

 \newcommand{\lrra}{\longrightarrow \hspace*{-.21in}\longrightarrow}

\newcommand{\llla}{\longleftarrow \hspace*{-.21in}\longleftarrow}

\newcommand{\betallla}{\underset{\hspace*{-.25in}\beta}{\llla}}

\newcommand{\betaYlrra}{\underset{\hspace*{.2in}\beta Y}{\lrra}}

\newcommand{\betaYllla}{\underset{\hspace*{-.2in}\beta Y}{\llla}}

\newcommand{\lbetaetaYlrda}{\underset{\hspace*{-.2in}\beta \eta Y}{\lrda}}

\newcommand{\rbetaetaYlrda}{\underset{\hspace*{.2in}\beta \eta Y}{\lrda}}

\newcommand{\cbetaetaYlrda}{\underset{\beta \eta Y}{\lrda}}

\newcommand{\Ylrra}{\underset{\hspace*{.25in}Y}{\lrra}}

\newcommand{\rbetaetalrda}{\underset{\hspace*{.25in}\beta\eta}{\lrda}}

\newcommand{\rbetaYlrda}{\underset{\hspace*{.25in}\beta Y}{\lrda}}

\newcommand{\lbetaYlrda}{\underset{\hspace*{-.25in}\beta Y}{\lrda}}

\newcommand{\cbetaetalrda}{\underset{\beta\eta}{\lrda}}

\newcommand{\lbetaetalrda}{\underset{\hspace*{-.25in}\beta\eta}{\lrda}}

\newcommand{\Yllla}{\underset{\hspace*{-.25in}Y}{\llla}}

\newcommand{\betaetallla}{\underset{\hspace*{-.25in}\beta\eta}{\llla}}

\newcommand{\betaetalrra}{\underset{\hspace*{.25in}\beta\eta}{\lrra}}

\newcommand{\betaetaYlrra}{\underset{\hspace*{.15in}\beta \eta Y}{\lrra}}

\newcommand{\betaetaYllla}{\underset{\hspace*{-.15in}\beta \eta Y}{\llla}}

\newcommand{\betaetaYnlrda}{\underset{\!\!\!\!\!\beta\eta Y}{\not \!\!\!\! \! \lrda}}

\newcommand{\2}{\underline{2}}

\newcommand{\n}{\underline{n}}

\newcommand{\rat}{\rightarrowtail\!\!\!\!\rightarrow}

\newcommand{\lat}{\leftarrow\!\!\!\!\leftarrowtail}

\newcommand{\rra}{\rightarrow\!\!\!\!\rightarrow}

\newcommand{\uk}{\underline{k}}

\newcommand{\m}{\underline{m}}

\newcommand{\0}{\underline{0}}

\newcommand{\1}{\underline{1}}

\newcommand{\zk}{\underline{k-1}}

\newcommand{\xk}{\underline{k+1}}

\newcommand{\yP}{\underline{P}}

\newcommand{\boxk}{\framebox{\uk}}

\newcommand{\fzk}{\framebox{\zk}}

\newcommand{\fxk}{\framebox{xk}}

\newcommand{\boxm}{\framebox{\m}}

\newcommand{\boxn}{\framebox{\n}}

\newcommand{\rhu}{\rightharpoonup}

\newcommand{\orhua}{\overset{\rhu}{a}}

\newcommand{\orhub}{\overset{\rhu}{b}}

\newcommand{\orhui}{\overset{\rhu}{\imath}}

\newcommand{\orhuj}{\overset{\rhu}{\jmath}}

\newcommand{\orhuk}{\overset{\rhu}{k}}

\newcommand{\orhuF}{\overset{\rhu}{F}}

\newcommand{\orhuG}{\overset{\rhu}{G}}

\newcommand{\orhugj}{\overset{\rhu}{gj}}

\newcommand{\orhuT}{\overset{\rhu}{T}}

\newcommand{\orhuzero}{\overset{\rhu}{0}}

\newcommand{\orhunabla}{\overset{\rhu}{\nabla}}

\newcommand{\orhuu}{\overset{\rhu}{u}}

\newcommand{\orhuPQ}{\overset{\rhu}{PQ}}

\newcommand{\orhuOP}{\overset{\rhu}{OP}}

\newcommand{\orhuPR}{\overset{\rhu}{PR}}

\newcommand{\orhun}{\overset{\rhu}{n}}

\newcommand{\orhuv}{\overset{\rhu}{v}}

\newcommand{\orhur}{\overset{\rhu}{r}}

\newcommand{\orhuN}{\overset{\rhu}{N}}

\newcommand{\orhuS}{\overset{\rhu}{S}}

\newcommand{\orhubeta}{\overset{\rhu}{\beta}}

\newcommand{\orhud}{\overset{\rhu}{d}}

\newcommand{\orhudS}{\overset{\rhu}{dS}}

\newcommand{\orhuV}{\overset{\rhu}{V}}

\newcommand{\orhux}{\overset{\rhu}{x}}

\newcommand{\dint}{\displaystyle \int}

\newcommand{\diint}{\displaystyle \iint}

\newcommand{\diiint}{\displaystyle \iiint}

\newcommand{\teta}{\tilde{\eta}}

\newcommand{\ov}{\overline{v}}

\newcommand{\oDelta}{\overline{\Delta}}

\newcommand{\ovdelt}{\overline{\delta}}

\newcommand{\ox}{\overline{x}}

\newcommand{\oX}{\overline{X}}

\newcommand{\oV}{\overline{V}}

\newcommand{\os}{\overline{s}}

\newcommand{\oE}{\overline{E}}

\newcommand{\of}{\overline{f}}

\newcommand{\oD}{\overline{D}}

\newcommand{\dsum}{\displaystyle \sum}

\newcommand{\uszG}{\underset{z_1 \in {\cal G}_1}{\dsum}}

\newcommand{\uizG}{\underset{z_i \in {\cal G}_i}{\dsum}}

\newcommand{\uzG}{\underset{z \in {\cal G}}{\dsum}}

\newcommand{\uzGi}{\underset{z\in{\cal G}_i}{\dsum}}

\newcommand{\uj}{\underset{ij}{\dsum}}

\newcommand{\uijk}{\underset{ijk}{\dsum}}

\newcommand{\urhoG}{\underset{\rho_1 \in {\cal G}_1}{\dsum}}

\newcommand{\ui}{\underset{i}{\dsum}}

\newcommand{\obeta}{\overline{\beta}}

\newcommand{\ogamma}{\overline{\gamma}}

\newcommand{\odelta}{\overline{\delta}}

\newcommand{\ovarepsilon}{\overline{\varepsilon}}

\newcommand{\olambda}{\overline{\lambda}}

\newcommand{\uij}{\underset{ij}{\dsum}}

\newcommand{\R}{\mathbb R}

\newcommand{\G}{\mathbb G}

\newcommand{\Z}{\mathbb Z}

\newcommand{\omu}{\overline{\mu}}

\newcommand{\ovarep}{\overline{\varepsilon}}

\newcommand{\oA}{\overline{A}}

\newcommand{\oP}{\overline{P}}

\newcommand{\lra}{\longrightarrow}

\newcommand{\Sdiint}{\underset{S^{(n)}_{\varepsilon}\backslash

R}{\diint}}

\newcommand{\Rdiint}{\underset{R\backslash

S\nvar}{\diint}}

\newcommand{\var}{\varepsilon}

\newcommand{\nvar}{^{(n)}_{\varepsilon}}

\newcommand{\Crho}{C\|\rho(t)\|_{p_1}}

\newcommand{\cI}{{\cal I}}

\newcommand{\XV}{\left(\begin{array}{c}

X\\V\end{array}\right)}

\newcommand{\ooE}{\left( \begin{array}{c}

0\\0\\E\end{array}\right)}

\newcommand{\Xalpha}{\left(\begin{array}{l} X_{\alpha}\\P_{\alpha}

\end{array}\right)}

\newcommand{\aus}{\underset{\alpha}{\sum}}

\title{GLOBAL CLASSICAL SOLUTIONS OF THE ``ONE AND ONE-HALF" DIMENSIONAL VLASOV-MAXWELL-FOKKER-PLANCK SYSTEM\thanks{This work was supported by the National Science Foundation under the award DMS-1211667.}}

\author{Stephen Pankavich
\thanks {Department of Applied Mathematics and Statistics, Colorado School of Mines,
Golden, Colorado 80401, (pankavic@mines.edu)}
\and
Jack Schaeffer\thanks {Department of Mathematics Sciences, Carnegie Mellon University, Pittsburgh, PA  15213,
 (js5m@andrew.cmu.edu)}}
\maketitle

\begin{abstract}  We study the ``one and one-half" dimensional Vlasov-Maxwell-Fokker-Planck system and obtain the first results concerning well-posedness of solutions.  Specifically, we prove the global-in-time existence and uniqueness in the large of classical solutions to the Cauchy problem and a gain in regularity of the distribution function in its momentum argument.
\end{abstract}

\noindent {\small{{\bf Key words}. Kinetic Theory, Vlasov, Fokker-Planck equation, global existence}}\\
{\small{{\bf Subject Classifications.  35L60, 35Q83, 82C22, 82D10}}}

\section{Introduction}

From a mathematical perspective, the fundamental non-relativistic equations which describe the time evolution of a collisionless plasma are given by the Vlasov-Maxwell system:

\vspace{.15in}

\noindent (VM) \hspace{.5in} $\left\{ \begin{array}{c} \partial_t f + v\cdot \nabla_x f + (E+v\times B) \cdot \nabla_v f = 0\\
\rho(t,x) = \dint\, f(t,x,v)\, dv,\ \ j(t,x) = \dint vf(t,x,v)\, dv\\
\partial_t E = \nabla \times B-j,\ \ \ \nabla \cdot E = \rho\\
\partial_t B= - \nabla \times E,\ \ \ \ \nabla \cdot B = 0.
\end{array} \right.$

\vspace{.15in}

\noindent Here, $f$ represents the distribution of (positively-charged) ions in the plasma, while $\rho$ and $j$ are the charge and current density, and $E$ and $B$ represent electric and magnetic fields generated by the charge and current.  The independent variables, $t \geq 0$ and $x,v \in \mathbb{R}^3$ represent time, position, and momentum, respectively, and physical constants, such as the charge and mass of particles, as well as the speed of light, have been normalized to one.

In order to include collisions of particles with a background medium in the physical formulation, a diffusive term is added to the Vlasov equation in (VM).  With this, the \change{equations are} referred to as the Vlasov-Maxwell-Fokker-Planck \change{system}.  Since basic questions of well-posedness remain unknown even in lower dimensions, we study a dimensionally-reduced version of this model for which $x \in \mathbb{R}$ and $v \in \mathbb{R}^2$, the so-called ``one and one-half dimensional" analogue, given by

\vspace{.15in}

\noindent (VMFP) \hspace{.25in}  $\left\{ \begin{array}{c} \partial_tf+v_1\partial_xf + K\cdot \nabla_v f = \Delta_v f\\
K_1 = E_1 + v_2B,\ \ \ K_2 = E_2 - v_1B\\
\rho(t,x) = \dint \, f(t,x,v)\,dv - \phi(x), \ \ \ j(t,x) = \dint \, vf(t,x,v)\,dv\\
\partial_tE_2 = -\partial_x B-j_2,\ \ \partial_t B = - \partial_xE_2,\ \ \partial_x E_1 = \rho,\ \ \partial_tE_1 = -j_1.\end{array}\right.$

\vspace{.15in}

\noindent This system is the lowest-dimensional analogue that one may study and include electromagnetic effects, as imposing $v \in \mathbb{R}$ changes the model into the one-dimensional Vlasov-Poisson system.  In (VMFP) we assume a single species of particles described by $f(t,x,v)$ in the presence of a given, fixed background $\phi \in C^1(\mathbb{R}) \cap H^1(\mathbb{R}) \cap L^1(\mathbb{R})$ that is neutralizing in the sense that

$$
\dint\phi (x)\, dx = \diint f(0,x,v) \, dv\,dx.
$$

\noindent The electric and magnetic fields are given by $E(t,x)=\langle E_1(t,x), E_2(t,x)\rangle$ and $B(t,x)$, respectively.  For initial data we take a nonnegative particle density $f^0$ with bounded moments $v^b_0\partial^k_x f^0 \in L^2 (\mathbb{R}^3)$, along with fields $E^0_2,B^0\in H^1(\mathbb{R})$.  Additionally, we specify data for $E_1$, namely

\vspace{.15in}

\noindent ($E_1DAT$) \hspace{.5in} $ E_1 (0,x) = \dint^x_{-\infty} \left(\dint f^0 (y,w)\, dw - \phi(y)\right)\, dy.$

\vspace{.15in}

\noindent In fact, this particular choice of data for $E_1$ is the only one which leads to a solution possessing finite energy (see \cite{5} and \cite{12}).  The inclusion of the neutralizing density $\phi$ is also necessary in order to arrive at finite energy solutions for (VMFP) with a single species of ion.

The analysis of (VM) has seen some progress in recent decades.  For instance, the global existence of weak solutions, which also holds for the relativistic system (RVM), was shown in \cite{3}.  Unlike its relativistic analogue, however, no results currently exist that ensure global existence of classical solutions.  Hence, the current work is focused in this direction.  Alternatively, a wide array of results have been obtained for the electrostatic simplification of (VM) -- the Vlasov-Poisson system, obtained by taking $B \equiv 0$ within the model.  The Vlasov-Poisson system does not include magnetic effects, and the electric field is given by an elliptic equation rather than a system of hyperbolic PDEs.  This simplification has led to a great deal of progress concerning the electrostatic system, including theorems regarding the well-posedness of solutions \cite{10, 11, 14, 15}.  The book \cite{6} can provide a general reference to information concerning kinetic equations of plasma dynamics, including (VM) and (VMFP).

Independent of these advances, many of the most basic existence and regularity questions remain unsolved for (VMFP). For much of the existence theory for collisionless models, one is mainly focused on bounding the velocity support of the distribution function $f$, assuming that $f^0$ possess\change{es} compact momentum support, as this condition has been shown to imply global existence \cite{7}.  Hence, one of the main difficulties which arises for (VMFP) is the introduction of particles that are propagated with arbitrarily large momenta, stemming from the inclusion of the diffusive Fokker-Planck operator.  Thus, the momentum support is necessarily unbounded and many known tools are unavailable.  Though the $v$-support of the distribution function is not bounded, we are able to overcome this issue by controlling large enough moments of the distribution to guarantee sufficient decay of $f$ in its momentum argument.  This also allows us to control nonlinear terms that arise within derivative estimates.  As an additional difference arising from the Fokker-Planck operator, we note that when studying collisionless systems, in which $\Delta_v f$ is omitted, $L^{\infty}$ is typically the proper space in which to estimate both the particle distribution and the fields.  With the addition of the diffusion operator, though, the natural space in which to estimate $f$ is now $L^2$.  Thus, to take advantage of the gain in regularity that should result from the Fokker-Planck term, we iterate in a weighted $L^2$ setting.  Other crucial features which appear include conservation of mass, and the symmetry of the diffusive operator.  The main advantage of the diffusion operator is that it allows one to estimate spatial derivatives of the density in $L^2(\mathbb{R}^3)$ independent of the momentum derivatives.  This is not true for the Vlasov-Maxwell system, which is conservative rather than dissipative.  Additionally, the \change{appearance of the} Laplacian allows the particle distribution to gain regularity in its momentum argument in comparison to its initial data. Finally, we note that our methods utilize an extra conservation law arising from the structure of the one-and-one-half dimensional system in order to bound the electric and magnetic fields.  Hence, they do not immediately apply to higher-dimensional analogues of (VMFP), though many of the other ideas presented below will likely be useful in the two, two-and-one-half, and three dimensional settings.

Though this is the first investigation of the well-posedness of (VMFP) in the large, others have studied Vlasov-Maxwell models incorporating a Fokker-Planck term for small initial data.  Both Yu and Yang \cite{17} and Chae \cite{1} constructed global classical solutions to the three-dimensional Vlasov-Maxwell-Fokker-Planck system for initial data sufficiently close to Maxwellian using Kawashima estimates and the well-known energy method.  Additionally, Lai \cite{8, 9} arrived at a similar result for a one and one-half dimensional ``relativistic" Vlasov-Maxwell-Fokker-Planck system using classical estimates.  The system in this work features a relativistic transport term, but still utilizes the Laplacian $\Delta_v$ as the Fokker-Planck term.  We note that the relativistic transport operator yields an extremely beneficial result, known as the cone estimate \change{(see \cite{5})}, whereas the non-relativistic transport within (VMFP) does not.  Thus, one essential component of the current paper is to overcome the lack of bounds on energy inside the light cone.  Finally, we mention \cite{12}, which arrived at similar results to our own but studied the relativistic Vlasov-Maxwell system with a Lorentz-invariant diffusion operator.  While we utilize some of the tools introduced within \cite{12}, and related articles \cite{4, 13}, we also introduce a number of new methods to overcome the loss of the cone estimate, finite speed of propagation, and {\it a priori} field bounds in order to arrive at the first large data global classical solutions to (VMFP) set in any dimension, see Theorem 1.2 below.  First we state a local existence theorem:

\begin{theorem} Let $a > 8$ and $\varepsilon > 0$ and denote

$$v_0 = \sqrt{1+|v|^2}.
$$

\noindent Assume that $\phi \in C^1(\mathbb{R}) \cap H^1(\mathbb{R}) \cap L^1(\mathbb{R})$. Assume that $f^0$ is continuous, nonnegative, and bounded and possesses a partial derivative with respect to $x$ such that

$$
\diint v^{a+2+\varepsilon}_0 (f^0)^2 \,dv\,dx + \diint v^{a-2+\varepsilon}_0 (\partial_xf^0)^2 \, dv\,dx
$$

\noindent is finite.  Assume that $E^0_2, B^0 \in C^1(\mathbb{R}) \cap H^1(\mathbb{R})$.  Then there is $T > 0$ depending only on

$$\diint \left[v^{a+2+\varepsilon}_0 (f^0)^2 + v^{a-2 + \varepsilon}_0 (\partial_x f^0)^2\right] \, dv\,dx + \| E^0_2\|^2_{H^1} + \|B^0\|^2_{H_1},$$

\noindent $f \in C([0,T] \times \mathbb{R}^3) \cap C^1((0,T] \times \mathbb{R}^3)$ with second order partial derivatives with respect to $v_1,v_2$ that are continuous on $(0,T] \times \mathbb{R}^3$, and $(E,B) \in C^1([0,T] \times \mathbb{R})$ for which (VMFP) holds, $(E_1DAT)$ holds, and

$$\left.(f,E_2,B)\right|_{t=0} = (f^0,E^0_2, B^0).
$$

\noindent Moreover, $f$ is nonnegative and bounded, and

$$
\diint \left[ v^{a+2 + \varepsilon}_0 f^2+v^{a-2+\varepsilon}_0 (\partial_x f)^2 \right] \, dv\,dx + \|E(t)\|_{H^1} + \|B(t)\|_{H^1}
$$

\noindent is bounded on $[0,T]$.  Lastly, the above solution is unique.
\end{theorem}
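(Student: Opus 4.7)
The plan is to construct the solution by a Picard iteration in the weighted $L^2$ setting dictated by the hypotheses. Set $(f^{(0)},E^{(0)},B^{(0)}) = (f^0,E^0,B^0)$; given $(f^{(n)},E^{(n)},B^{(n)})$, let $f^{(n+1)}$ be the unique classical solution of the \emph{linear} equation
\[
\partial_t f^{(n+1)} + v_1 \partial_x f^{(n+1)} + K^{(n)}\cdot\nabla_v f^{(n+1)} = \Delta_v f^{(n+1)}, \qquad f^{(n+1)}\big|_{t=0} = f^0,
\]
with drift $K^{(n)}=(E_1^{(n)}+v_2 B^{(n)}, E_2^{(n)}-v_1 B^{(n)})$; existence, continuity up to $t=0$, and the claimed $C^1$ and $C^2_v$ parabolic regularity for $t>0$ follow from standard theory for linear Kolmogorov operators with smooth bounded coefficients. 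The maximum principle preserves nonnegativity of $f^{(n+1)}$ and, once an $L^\infty$ bound on $K^{(n)}$ is known, yields an $L^\infty$ bound on $f^{(n+1)}$. Form $\rho^{(n+1)}, j^{(n+1)}$ from $f^{(n+1)}$ and define $(E^{(n+1)}, B^{(n+1)})$ by solving the 1D Maxwell block, which diagonalizes under the Riemann invariants $u = E_2 + B$, $w = E_2 - B$ into two transport equations admitting explicit characteristic formulas; $E_1^{(n+1)}$ is fixed by $(E_1DAT)$ together with $\partial_t E_1 = -j_1$.

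The core of the proof is a closed set of a priori estimates. Multiplying the VFP equation by $v_0^{a+2+\varepsilon}f^{(n+1)}$ and (after differentiating in $x$) by $v_0^{a-2+\varepsilon}\partial_x f^{(n+1)}$, and integrating in $(x,v)$, the transport term vanishes, $\nabla_v\cdot K^{(n)} = 0$, and $v\cdot K^{(n)} = v_1 E_1^{(n)} + v_2 E_2^{(n)}$ (the magnetic piece cancels), leaving two dissipative identities whose dissipations are $\iint v_0^{a+2+\varepsilon}|\nabla_v f^{(n+1)}|^2\,dv\,dx$ and $\iint v_0^{a-2+\varepsilon}|\nabla_v\partial_x f^{(n+1)}|^2\,dv\,dx$. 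Cauchy--Schwarz in $v$ expresses $\|\rho^{(n+1)}\|_{H^1}$ and $\|j^{(n+1)}\|_{H^1}$ in terms of the weighted norms of $f^{(n+1)}$ and $\partial_x f^{(n+1)}$, with the hypothesis $a>8$ ensuring convergence of the $v_0^{-s} v^k$ integrals on $\mathbb{R}^2$; the Riemann-invariant characteristic formulas then give
\[
\|E^{(n+1)}(t)\|_{H^1}+\|B^{(n+1)}(t)\|_{H^1} \leq C\bigl(\|E^0\|_{H^1}+\|B^0\|_{H^1}\bigr) + C\int_0^t \bigl(\|j^{(n+1)}\|_{H^1}+\|\rho^{(n+1)}\|_{H^1}\bigr)\,ds,
\]
and in particular an $L^\infty$ bound on $(E^{(n+1)},B^{(n+1)})$ via 1D Sobolev embedding.

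The most delicate point is closing the $\partial_x f^{(n+1)}$ estimate, whose forcing $-\partial_x K^{(n)}\cdot\nabla_v f^{(n+1)}$ involves $\partial_x E^{(n)}$ and $\partial_x B^{(n)}$, for which only $L^2_x$ control is available (pointwise bounds would require $H^2$ on the fields and hence $\partial_x^2 f\in L^2_v$, stronger than what the hypotheses provide). The way out is Cauchy--Schwarz in $v$, which turns the mixed term into $\|\partial_x E^{(n)}+v\,\partial_x B^{(n)}\|_{L^2_x}$ times $\bigl(\iint v_0^{a+\varepsilon}|\nabla_v f^{(n+1)}|^2\bigr)^{1/2}\bigl(\iint v_0^{a-2+\varepsilon}(\partial_x f^{(n+1)})^2\bigr)^{1/2}$; the gradient factor is then absorbed by the \emph{stronger} dissipation $\iint v_0^{a+2+\varepsilon}|\nabla_v f^{(n+1)}|^2$ arising in the first energy identity, the two-power weight gap being precisely what the Fokker--Planck term buys. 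This absorption, combined with Gronwall, closes the loop; a bootstrap argument with $T>0$ small enough, depending only on the data quantity in the statement, then keeps every iterate in a common ball in the norm
\[
\diint v_0^{a+2+\varepsilon} f^2\,dv\,dx + \diint v_0^{a-2+\varepsilon}(\partial_x f)^2\,dv\,dx + \|E\|_{H^1}^2 + \|B\|_{H^1}^2.
\]

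Convergence and uniqueness follow by a standard difference argument. The linear equation satisfied by $\delta f = f^{(n+1)} - f^{(n)}$ is driven by $-\delta K^{(n-1)}\cdot\nabla_v f^{(n)}$, with $\delta K^{(n-1)}$ built from $E^{(n)}-E^{(n-1)}$ and $B^{(n)}-B^{(n-1)}$; a weaker-weight energy estimate on $\delta f$, combined with the uniform bounds above, produces a contraction on a possibly smaller time interval. The limit $(f,E,B)$ satisfies (VMFP) in the classical sense with the stated regularity: parabolic smoothing for the now-linear VFP equation (with coefficients that are as regular as $(E,B)$) gives $C^1$ in $(t,x,v)$ and continuous second $v$-derivatives for $t>0$, while $(E,B)\in C^1$ follows from the characteristic formulas once $\rho, j\in C$. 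The same difference estimate applied to two hypothetical solutions with identical data yields uniqueness.
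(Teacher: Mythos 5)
Your plan shares the energy-estimate skeleton with the paper but misses one step the paper treats as essential and waves past another. The drift $K^{(n)} = (E_1^{(n)} + v_2 B^{(n)},\ E_2^{(n)} - v_1 B^{(n)})$ is \emph{not} bounded in $v$ --- the magnetic contribution grows linearly --- so existence and regularity of solutions to your linear step do not follow from ``standard theory for linear Kolmogorov operators with smooth bounded coefficients,'' and this is precisely where the paper does extra work. The paper truncates the magnetic part of the drift, setting $K = E + \psi^R(v)\, B\,(v_2,-v_1)$ with a smooth cutoff $\psi^R$ that vanishes for $|v| \geq R$, so the drift in each linear step is genuinely bounded and the existence theory of Degond (Proposition A.1 of \cite{2}) applies. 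The cutoff parameter is taken to increase geometrically, $R = 2^n$, along the iteration; since $\psi^R - \psi^{\mathcal R}$ (with $\mathcal R \geq R$) is supported where $|v| \geq R-1$, one has $v_0\,|\psi^R(v) - \psi^{\mathcal R}(v)| \leq v_0^{1+\varepsilon/2} R^{-\varepsilon/2}$, and this feeds a summable $C R^{-\varepsilon} t$ tail into the Cauchy estimate (eqs.\ (\ref{E4.26})--(\ref{E4.27})) so that the limit solves the untruncated system. Without some such device, or an equivalent regularization of the drift, you have no solution operator for your linearization --- this is the genuine gap.

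Second, the regularity asserted in the theorem --- $f \in C([0,T]\times\mathbb R^3)\cap C^1((0,T]\times\mathbb R^3)$ with continuous $\partial^2_{v_i v_j} f$ --- does not come out of the energy estimates, and ``parabolic smoothing'' is not a one-line observation here: the operator $\partial_t + v_1\partial_x + K\cdot\nabla_v - \Delta_v$ is hypoelliptic (Kolmogorov type), not uniformly parabolic, and the coefficients are only as smooth as $(E,B)$. The paper writes out the explicit fundamental solution $\mathcal G$ of $\partial_t + v_1\partial_x - \Delta_v$, represents $f$ by Duhamel with $\nabla_w\mathcal G\cdot(Kf)$ as source, derives the kernel bound $\iint |\nabla_w\mathcal G|\,dw\,dy \leq C(t-\tau)^{-1/2}$ together with a matching H\"older modulus, and then bootstraps: H\"older continuity in $v$, then differentiability in $v$, then H\"older continuity in $x$, before invoking Victory--O'Dwyer \cite{16} for the full interior regularity. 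This machinery is also needed to justify the formal energy identities you apply to the iterates, whose regularity you have not yet established at the point you invoke them. Two smaller remarks: your ``Cauchy--Schwarz in $v$'' bound on the mixed term does not factor as written without an additional $L^\infty_x$ control on a $v$-moment, which the paper supplies by the anisotropic interpolation $\int f^2 v_0^{\beta+2}\,dv \leq 2\|v_0^{\alpha/2}f\|_{L^2}\|v_0^{\beta/2}\partial_x f\|_{L^2}$ (eq.\ (\ref{E4.12})); and the contraction does not require passing to a smaller time interval --- the paper closes the Cauchy argument with the factorial estimate $x^n(t) \leq (\|x^0\|_\infty + 1)\tfrac{(CT)^n}{n!} + A^{-n}e^{CAT}$. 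Apart from these points, your weighted-$L^2$ scheme, the choice of norms with the two-power gap, the cancellation $v\cdot(v_2,-v_1)=0$, the Maxwell treatment, and the difference estimate for uniqueness match the paper's approach.
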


Note that $f^0$ is not assumed to be smooth in $v$.  Now we may state the main result:

\begin{theorem} In addition to the hypotheses of Theorem 1.1, assume that $E_2^0, B^0\in L^1(\mathbb{R})$ and $v^{\delta}_0 f^0 \in L^{\infty}(\mathbb{R}^3)$ for some $\delta > a + 2 + \varepsilon$, and $v^2_0f^0 \in L^1 (\mathbb{R}^3)$.  Then, the local solution of Theorem 1.1 may be extended to $[0,\infty) \times \mathbb{R}^3$.
\end{theorem}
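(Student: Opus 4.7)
The argument is a continuation one: by Theorem 1.1 it suffices to show that the quantity
$$Q(t) := \diint \left[v_0^{a+2+\varepsilon} f^2 + v_0^{a-2+\varepsilon} (\partial_x f)^2 \right]\, dv\, dx + \|E_2(t)\|_{H^1}^2 + \|B(t)\|_{H^1}^2$$
remains bounded on every compact interval $[0,T^*]$, after which the local theorem may be iterated indefinitely. I would split the \emph{a priori} bound on $Q$ into three blocks.

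The first block collects the basic \emph{a priori} identities of (VMFP): conservation of mass, the energy identity (whose Fokker--Planck dissipation gives only linear-in-$t$ growth of $\|E\|_{L^2}^2 + \|B\|_{L^2}^2 + \diint |v|^2 f\, dv\, dx$), and the special 1.5D canonical-momentum balance. The latter arises by introducing a potential $A$ with $B = \partial_x A$ and $E_2 = -\partial_t A$, along whose Vlasov characteristics $v_2 + A$ would be conserved in the collisionless case; the Fokker--Planck term turns this into an integrated balance law whose right-hand side is dissipative and controlled by low moments of $f$. Combined with the hypothesis $v_0^2 f^0 \in L^1$, these yield uniform bounds on low moments of $f$ and on $\|j\|_{L^1_x}$ on every bounded time interval.

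The second and most delicate block is the $L^\infty$ control of $E_2$ and $B$. The 1.5D Maxwell subsystem diagonalizes as $(\partial_t \pm \partial_x)(E_2 \pm B) = -j_2$, so that integration along the null characteristics $x \mp (t-s)$ gives
$$(E_2 \pm B)(t,x) = (E_2^0 \pm B^0)(x \mp t) - \dint_0^t j_2\bigl(s,\, x \mp (t-s)\bigr)\, ds.$$
The $L^1$ hypothesis on $E_2^0, B^0$ controls the data term, and the time integral is closed by an $L^\infty_x$ bound on $j_2$. This bound is produced from the hypothesis $v_0^\delta f^0 \in L^\infty$ with $\delta > a + 2 + \varepsilon$: propagating a weighted $L^\infty$ bound for $f$ along the Fokker--Planck flow (via the fundamental solution of the momentum diffusion coupled with a Gr\"onwall loop against $\|K\|_\infty$), one obtains $v_0^\delta f \in L^\infty([0,T^*] \times \mathbb{R}^3)$, whence
$$|j_2(t,x)| \leq \|v_0^\delta f(t,\cdot,\cdot)\|_\infty \dint v_0^{1-\delta}\, dv,$$
and the last integral is finite since $\delta > 3$. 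This closes the self-consistent loop and produces $\|E_2\|_\infty + \|B\|_\infty \leq C(T^*)$.

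With the field $L^\infty$ bound in hand, $E_1$ is recovered from $\partial_x E_1 = \rho$ together with the $L^1 \cap L^\infty$ bound on $\rho$, and field derivatives from the Maxwell equations. The weighted $L^2$ bound on $f$ follows by multiplying the Vlasov equation by $v_0^{a+2+\varepsilon} f$ and integrating: symmetry of $\Delta_v$ extracts a nonnegative dissipation term plus a lower-order weighted remainder controlled by moment bounds, while the drift term is dominated by $\|K\|_\infty$. Differentiating in $x$ and repeating with weight $v_0^{a-2+\varepsilon}$ yields the companion bound on $\partial_x f$, the coupling through $\partial_x K$ being absorbed by the $H^1$ bounds on the fields; a final Gr\"onwall step gives $Q(t) \leq C(T^*)$ on $[0,T^*]$. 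The main obstacle throughout is the $L^\infty$ field bound of the second block: with no light-cone argument available (the particles being non-relativistic) and no \emph{a priori} momentum support (the Fokker--Planck term populating arbitrarily high velocities), $j_2$ must be controlled purely through moment decay of $f$, and the interlocking of the canonical-momentum balance, the heat-kernel weighted $L^\infty$ propagation, and the extra hypotheses $E_2^0, B^0 \in L^1$, $v_0^\delta f^0 \in L^\infty$, $v_0^2 f^0 \in L^1$ is the technical heart of the argument.
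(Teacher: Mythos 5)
Your proposal identifies the right ingredients and even singles out the key structural fact — the canonical momentum $v_2 + A$ with $\partial_x A = B$, $\partial_t A = -E_2$ — but the description of how the $L^\infty$ field bound actually closes is where the real difficulty lies, and as written your account of that step is both circular and literally incorrect. You say the weighted $L^\infty$ bound on $f$ is obtained by ``a Gr\"onwall loop against $\|K\|_\infty$,'' but $K_1 = E_1 + v_2 B$, $K_2 = E_2 - v_1 B$ grows linearly in $v$, so $\|K\|_\infty$ is not even finite; and even if one replaced it by $\|E\|_\infty + \|B\|_\infty$, those are exactly the quantities you are trying to produce, so the loop does not close on its own. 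This is precisely the non-relativistic obstruction the paper is built around: with no light cone, the moment bound and the field bound must be decoupled, not merely ``interlocked.''

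The paper's resolution is a two-stage split that your sketch conflates. Stage one (the paper's Lemma~2.3) multiplies the Vlasov--Fokker--Planck equation by $R^p(v_2 + A)$, so that the dangerous $K_2 = E_2 - v_1 B$ drift is annihilated via $\partial_t A + v_1 \partial_x A = -K_2$; what survives needs only $\|A\|_\infty$, which is controlled from the energy identity plus $E_2^0, B^0 \in L^1$ (giving $\|A(t)\|_\infty \leq C(1+t)^{3/2}$). This yields $\|R^p(v_2) f(t)\|_\infty \leq C_T$ for all $p$ \emph{without} any $L^\infty$ control of $E_2$ or $B$. Stage two (Lemma~2.4) then writes $(E_2 \pm B)$ along null characteristics as you do, but splits the $j_2$ integrand between $v_1$- and $v_2$-decay factors, bounding $\|(E_2\pm B)(t)\|_\infty \leq C_T(1 + F(t)^{(1+\varepsilon_1)/\gamma})$ where $F(t) = \sup_{s\le t}\|v_0^\gamma f(s)\|_\infty$; only then does a Gr\"onwall inequality in $F$ close, using the sub-linear power $(1+\varepsilon_1)/\gamma < 1$ and the already-secured $v_2$-moments. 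Your one-line bound $|j_2| \leq \|v_0^\delta f\|_\infty \int v_0^{1-\delta}\,dv$ is fine \emph{after} $\|v_0^\delta f\|_\infty$ is known, but cannot be the mechanism by which it is obtained. Aside from this gap, the remaining blocks (conservation laws, weighted $L^2$ energy estimates on $f$ and $\partial_x f$, closing with Theorem~1.1) match the paper's Lemmas~2.2, 2.5, and 2.6, up to a small imprecision: in the $L^2$ estimate the magnetic drift is killed by $v\cdot(v_2,-v_1) = 0$, not by a bound on $\|K\|_\infty$.
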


We note that Theorems 1.1 and 1.2 can be altered to accommodate a friction term.  In the model with friction, the Vlasov equation is changed to

$$
\partial_t f + v_1\partial_x + k \cdot \nabla_v f = \nabla_v \cdot (\nabla_vf + vf).
$$

\noindent The new term is lower order and does not change \change{either of} the result\change{s}.

As additional evidence of the gain in regularity in $v$ we also state:

\begin{proposition} Assume the hypotheses of Theorem 1.2 hold.  Then for all $t > 0$

$$
\diint \left( f^2 + t \left| \nabla_v f \right|^2 + \dfrac{1}{2} t^2 \left|\nabla^2_v f\right|^2 \right) \, dv\,dx \leq C_t.
$$
\end{proposition}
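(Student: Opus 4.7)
The approach is a hypoelliptic energy estimate with explicit time weights. Writing $\|\cdot\|$ for the $L^2(dv\,dx)$ norm, define
\[
\Phi(t) := \iint \bigl(f^2 + t\,|\nabla_v f|^2 + \tfrac{1}{2}t^2\,|\nabla_v^2 f|^2\bigr)\, dv\, dx,
\]
and aim to show $\Phi'(t) \leq \psi(t)$ for some locally integrable $\psi$, whence $\Phi(t) \leq C_t$ follows by integration in $t$. The crucial a priori input from Theorem 1.2 is that $\|\partial_x f(t)\|^2$ is uniformly bounded on any compact interval $[0,T]$, since the weight $v_0^{a-2+\varepsilon}$ appearing in Theorem 1.1 exceeds $1$.

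Three energy identities drive the argument. First, pairing (VMFP) with $f$, the transport term vanishes by integration by parts in $x$ and the drift $K\cdot\nabla_v f$ vanishes because $\nabla_v\cdot K = 0$ (a direct consequence of $K_1 = E_1 + v_2 B$, $K_2 = E_2 - v_1 B$), yielding $\tfrac{d}{dt}\|f\|^2 = -2\|\nabla_v f\|^2$. Second, differentiating (VMFP) in $v_i$ produces the equation for $g_i := \partial_{v_i}f$ with source $-\delta_{i1}\partial_x f - (\partial_{v_i}K)\cdot\nabla_v f$; pairing with $g_i$, summing over $i$, and observing that the magnetic drift contributions cancel by the antisymmetry $\partial_{v_1}K=(0,-B)$, $\partial_{v_2}K=(B,0)$, one obtains
\[
\tfrac{d}{dt}\|\nabla_v f\|^2 = -2\|\nabla_v^2 f\|^2 - 2\!\iint g_1\,\partial_x f\, dv\, dx.
\]
Third, an analogous calculation for $h_{ij} := \partial_{v_i v_j}f$, in which the Lorentz-type couplings again cancel owing to the symmetry of $h_{ij}$, leaves
\[
\tfrac{d}{dt}\|\nabla_v^2 f\|^2 = -2\|\nabla_v^3 f\|^2 - 4\!\iint \bigl(h_{11}\,\partial_x g_1 + h_{12}\,\partial_x g_2\bigr)\, dv\, dx.
\]

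Combining these via Leibniz, derivatives of the weights $1,\ t,\ t^2/2$ absorb exactly one copy of each dissipation term, giving
\[
\Phi'(t) = -\|\nabla_v f\|^2 - t\|\nabla_v^2 f\|^2 - t^2\|\nabla_v^3 f\|^2 - 2t\!\iint g_1\partial_x f\, dv\, dx - 2t^2\!\iint (h_{11}\partial_x g_1 + h_{12}\partial_x g_2)\, dv\, dx.
\]
Cauchy-Schwarz controls the first cross term via $|2t\!\iint g_1\partial_x f| \leq \tfrac{1}{2}\|\nabla_v f\|^2 + 2t^2\|\partial_x f\|^2$, absorbed into the $-\|\nabla_v f\|^2$ dissipation. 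For the second cross term, successive integrations by parts (once in $x$ and twice in a single $v_j$) convert each summand into $\pm\!\iint\partial_x f\cdot\partial^3_v f$, which Cauchy-Schwarz dominates by $\tfrac{1}{4}t^2\|\nabla_v^3 f\|^2 + Ct^2\|\partial_x f\|^2$, absorbed into the $-t^2\|\nabla_v^3 f\|^2$ dissipation. The net result is $\Phi'(t) \leq C(1+t^2)\|\partial_x f\|^2 \leq C_T(1+t^2)$ on $[0,T]$, and integration yields $\Phi(t) \leq C_t$.

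The main obstacle is the hypoelliptic commutator $[\partial_{v_1},v_1\partial_x] = \partial_x$, whose cross terms naively appear to require $L^2$ control of $\nabla_x f$, which is not available. The resolution is the integration-by-parts trick above, in which each $\partial_x g_i$ is traded for $\partial_x f$ paired with a third-order $v$-derivative already controlled by the dissipation, together with the uniform $L^2$ bound on $\partial_x f$ inherited from Theorems 1.1 and 1.2. A secondary technical point is justifying the third $v$-derivatives of $f$ in $L^2$ for the formal computations; this is handled by the parabolic smoothing intrinsic to $\Delta_v$, or rigorously via a mollification argument in which the estimate is proved first for mollified solutions and then passed to the limit.
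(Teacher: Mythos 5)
Your argument is essentially the paper's own: you use the same time-weighted functional $\Phi(t)$, derive the same three energy identities (including the integration by parts that turns the $\nabla_v\partial_{v_1}f\cdot\nabla_v\partial_x f$ cross term into $\partial_x f\,\Delta_v\partial_{v_1}f$, i.e.\ $\pm\iint\partial_x f\cdot\partial_v^3 f$), absorb the cross terms via Cauchy--Schwarz into the dissipation, invoke Lemma~2.6 for the uniform $L^2$ bound on $\partial_x f$, and pass to the genuine solution by a regularization argument. The only cosmetic quibble is the description of the integration by parts as ``once in $x$ and twice in a single $v_j$'' --- a single integration by parts in $v_j$ suffices --- but this has no bearing on the correctness of the argument.
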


This paper proceeds as follows.  The proof of Theorem 1.1 is postponed to Section 4 and Sections 2 and 3 assume the result of this theorem.  In Section 2 we state six lemmas and show how Theorem 1.2 follows from them.  The proofs of these lemmas and Proposition 1.3 are \change{contained within} Section 3.

Throughout the paper $C$ denotes a positive generic constant that may change from line to line.  When necessary, we will specifically identify the quantities upon which $C$ may depend.  Regarding norms, we will abuse notation and allow the reader to differentiate certain norms via context.  For instance $\|f(t)\|_{\infty} = \underset{x\in \mathbb{R},v\in \mathbb{R}^2}{\sup} \left| f(t,x,v)\right|$ whereas $\|B(t)\|_{\infty} = \underset{x \in \mathbb{R}}{\sup} \left| B(t,x)\right|$, with analogous statements for $\| \cdot \|_2$ and $< \cdot,\cdot>$ which denote the $L^2$ norm and inner product, respectively.

\section{Global Existence}

Throughout this section we assume the hypotheses of Theorem 1.1 hold.  Let $T$ be the maximal time of existence and, in order to prove Theorem 1.2 by contradiction, assume $T$ is finite.

To begin, we will first prove a result that will allow us to estimate the particle density and its moments.  When studying collisionless kinetic equations, one often wishes to integrate along the Vlasov characteristics in order to derive estimates.  However, the appearance of the Fokker-Planck term changes the structure of the operator in (VMFP), and the values of the distribution function are not conserved along such curves. Hence, the following lemma (similar to that of \cite{2}) will be utilized to estimate the particle distribution in such situations.

\begin{lemma} Let $g \in L^1((0,T),L^{\infty}(\mathbb{R}^3))$ and $h_0 \in L^{\infty}(\mathbb{R}^3) \cap L^2(\mathbb{R}^3)$  be given.  Let $F(t,x,v)=\mathcal{F}(t,x,v)+\mathcal{B}(t,x)\langle v_2,-v_1\rangle$ be given with $\mathcal{F} \in W^{1,\infty} ((0,T) \times \mathbb{R}^3;\mathbb{R}^2)$ and $\mathcal{B} \in W^{1,\infty} ((0,T) \times \mathbb{R};\mathbb{R})$.  Assume $h(t,x,v)$ is a weak solution of

\be
\left\{  \begin{array}{rcc}{\cal L}h & =&\partial_th +v_1 \partial_xh + F(t,x,v) \cdot \nabla_v h - \Delta_v h = g(t,x,v)\\
\\
&&  h(0,x,v) = h_0 (x,v)
\end{array}\right. \label{E2.1}
\ee

\noindent so that $h \in L^2((0,T) \times \mathbb{R}; H^1(\mathbb{R}^2))$ satisfies

$$\begin{gathered} \int_0^T \iint \biggl [ h \left ( -\partial_t \phi - v_1 \partial_x \phi  \right ) + \nabla_v h \cdot \left ( F \phi + \nabla_v \phi \right ) - g\phi \biggr ]  dv dx dt\\ - \iint h_0(x,v) \phi(0,x,v)  dv  dx = 0
\end{gathered}$$

\noindent for every $\phi \in \mathcal{D}([0,T) \times \mathbb{R}^3)$.
Then, for every $t \in [0,T]$

$$
\|h(t)\|_{\infty} \leq \|h_0 \|_{\infty} + \dint^t_0 \|g(s)\|_{\infty} \, ds.
$$
\end{lemma}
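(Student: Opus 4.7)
The plan is to run a positive-part energy estimate for $h$ against the candidate barrier $H(t) := \|h_0\|_\infty + \int_0^t \|g(s)\|_\infty\,ds$. Setting $w := h - H(t)$, one checks that $w$ solves $\mathcal{L}w = g(t,x,v) - \|g(t)\|_\infty \leq 0$ weakly on $(0,T)\times\mathbb{R}^3$, with $w(0,x,v) = h_0(x,v) - \|h_0\|_\infty \leq 0$. The goal is to show $w_+ \equiv 0$, so that $h \leq H(t)$ almost everywhere, and then apply the same reasoning to $-h$ (which weakly solves the analogous problem with source $-g$ and datum $-h_0$) to obtain $h \geq -H(t)$. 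Together these give the stated $L^\infty$ bound.

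Formally, I would multiply $\mathcal{L}w \leq 0$ by $w_+$ and integrate over $\mathbb{R}^3$. The time derivative contributes $\tfrac{1}{2}\tfrac{d}{dt}\|w_+(t)\|_2^2$. The $x$-transport term contributes $\tfrac{1}{2}\int v_1\,\partial_x(w_+^2)\,dx\,dv$, which vanishes after integration by parts since $v_1$ is independent of $x$ and $w_+$ decays at infinity. Writing $F = \mathcal{F} + \mathcal{B}\langle v_2, -v_1\rangle$ and integrating by parts in $v$, the drift term becomes $-\tfrac{1}{2}\int (\nabla_v \cdot F)\,w_+^2\,dx\,dv$; because $\mathcal{B}\langle v_2,-v_1\rangle$ is divergence-free in $v$, only $\nabla_v\cdot\mathcal{F}$ contributes, and this term is bounded by $\tfrac{1}{2}\|\nabla_v\mathcal{F}\|_\infty \|w_+(t)\|_2^2$ using $\mathcal{F}\in W^{1,\infty}$. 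The Fokker--Planck term yields the nonnegative dissipation $\|\nabla_v w_+(t)\|_2^2$, and the source obeys $\langle w_+,\,g-\|g\|_\infty\rangle \leq 0$. Discarding the dissipation produces
$$\frac{d}{dt}\|w_+(t)\|_2^2 \leq \|\nabla_v\mathcal{F}\|_\infty\,\|w_+(t)\|_2^2, \qquad \|w_+(0)\|_2 = 0,$$
and Gronwall's inequality forces $w_+ \equiv 0$. The estimate is not vacuous because the pointwise bound $w_+ \leq h_+$ combined with $h \in L^2((0,T)\times\mathbb{R}; H^1(\mathbb{R}^2))$ ensures $\|w_+(t)\|_2$ is finite for a.e.\ $t$.

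The principal obstacle is that $w_+$ is not an admissible test function in the stated weak formulation, which requires $\phi \in \mathcal{D}([0,T)\times\mathbb{R}^3)$. To justify the above rigorously, I would renormalize in the style of DiPerna--Lions: pick $\beta_\epsilon \in C^2(\mathbb{R})$ convex and nonnegative, with $\beta_\epsilon \equiv 0$ on $(-\infty,0]$, $\beta_\epsilon'$ bounded, and $\beta_\epsilon(r) \to \tfrac{1}{2}r_+^2$, $\beta_\epsilon'(r)\to r_+$ as $\epsilon\to 0$. Testing the weak formulation against $\beta_\epsilon'(w)\,\eta_R(x,v)\,\chi_s(t)$, where $\eta_R$ is a smooth $(x,v)$-cutoff and $\chi_s$ a Lipschitz time cutoff for $[0,s]$, yields a regularized identity
$$\iint \beta_\epsilon(w(s))\,\eta_R\,dv\,dx + \int_0^s\!\!\iint \beta_\epsilon''(w)|\nabla_v w|^2 \eta_R \,dv\,dx\,dt \leq \int_0^s\!\!\iint \beta_\epsilon(w)\,\bigl[(\nabla_v\!\cdot\!\mathcal{F})\eta_R + v_1\partial_x\eta_R + F\!\cdot\!\nabla_v\eta_R - \Delta_v\eta_R\bigr]\,dv\,dx\,dt,$$
the $t=0$ boundary contribution vanishing since $\beta_\epsilon(w(0))=0$. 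Sending $R\to\infty$ using the $L^2_t H^1_v$ regularity of $h$, then $\epsilon\to 0$, recovers the sharp $L^2$ bound on $w_+$ and closes the argument; the symmetric treatment of $-h$ delivers the matching lower bound and completes the proof.
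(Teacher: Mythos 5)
Your argument is correct, and it isolates exactly the structural observation the paper emphasizes before declining to give details: although $F$ itself is unbounded (the $\mathcal{B}\langle v_2,-v_1\rangle$ piece grows like $|v|$), what actually enters the estimate is $\nabla_v\cdot F=\nabla_v\cdot\mathcal{F}$, which is bounded because the magnetic drift is $v$-divergence free. The paper's own treatment is not self-contained; it cites \cite{12} (Lemma~1) with the remark that only the replacement $\hat v_1\mapsto v_1$ and the identity $\nabla_v\cdot F=\nabla_v\cdot\mathcal{F}$ are needed, and \cite{12} in turn traces back to Degond's machinery \cite{2}, which rests on the explicit Gaussian fundamental solution of the kinetic Fokker--Planck operator (the same kernel $\mathcal{G}$ used in Section~4 of this paper). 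Your route is different in flavor and arguably more elementary: you use a Stampacchia positive-part energy estimate against the barrier $H(t)$, made rigorous via DiPerna--Lions renormalization of the weak formulation, with the dissipative $\Delta_v$ term supplying a nonnegative contribution that can simply be discarded. This avoids any appeal to the explicit kernel and makes the dependence on $\|\nabla_v\cdot\mathcal{F}\|_\infty$ transparent. What it costs is the machinery needed to legitimize testing against $\beta_\varepsilon'(w)\eta_R\chi_s$ when the given weak formulation only permits $\phi\in\mathcal D$; you correctly flag this, and the Lipschitz regularity of $F$ plus the $L^2_tH^1_v$ class of $h$ do make the commutator argument go through, but the write-up leaves that step at the level of a citation rather than a proof.

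Two small points worth tightening. First, in the regularized identity, after two integrations by parts the Laplacian contributes $+\Delta_v\eta_R$, not $-\Delta_v\eta_R$, on the right-hand side; this is immaterial since it vanishes as $R\to\infty$, but it is a sign slip. Second, for the terms $v_1\partial_x\eta_R$ and $F\cdot\nabla_v\eta_R$ to vanish as $R\to\infty$ you need $\beta_\varepsilon(w)\in L^1((0,s)\times\mathbb{R}^3)$ (not merely $L^2$), because $|v_1|$ and $|F|$ are of order $R$ on the annulus where $\nabla\eta_R$ lives, cancelling the $1/R$ from $\nabla\eta_R$; your choice of $\beta_\varepsilon$ with bounded $\beta_\varepsilon'$ (hence $\beta_\varepsilon(r)\leq\tfrac12 r_+^2\leq\tfrac12 h^2\in L^1$) does deliver this, but it should be stated explicitly rather than left implicit. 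Finally, the Gronwall step and the $L^2_tH^1_v$ hypothesis give the conclusion only for almost every $t$; upgrading to ``for every $t\in[0,T]$'' as stated requires a time-continuity remark (e.g.\ weak continuity of $t\mapsto h(t)$ in $L^2$ coming from the equation itself), which is standard but omitted.
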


Another useful tool will be the conservation of mass and energy growth identities, which we establish in the next result.

\begin{lemma}{\rm (Conservation Laws).}  Assume $v^2_0 f^0 \in L^1 (\mathbb{R}^3)$.  Then, for every $t\in [0,T)$,

$$\|f(t)\|_1 = \|f^0\|_1
$$

\noindent and

$$
\diint |v|^2 f(t,x,v) \, dv\,dx + \dint(|E|^2 + B^2) dx \leq C(1+t).
$$
\end{lemma}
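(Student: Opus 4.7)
\medskip

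\noindent\textbf{Proof plan for Lemma 2.2.}

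\emph{Conservation of mass.} I will integrate the Vlasov--Fokker--Planck equation
$$\partial_t f + v_1 \partial_x f + K \cdot \nabla_v f = \Delta_v f$$
over $\mathbb{R}\times\mathbb{R}^2$. The transport terms $v_1\partial_x f$ and $K\cdot\nabla_v f$ are in divergence form in $(x,v)$ (since $K_1=E_1+v_2B$ and $K_2=E_2-v_1B$ are divergence-free in $v$), and $\Delta_v f$ is also a total $v$-divergence. Each of these integrates to zero provided the moment bounds guaranteed by Theorem 1.1 supply enough decay in $v$ to justify the boundary contributions vanishing. Specifically, the weighted control $v_0^{a+2+\varepsilon}f^2\in L^1_{xv}$ with $a>8$ yields by Cauchy--Schwarz that $v_0^{2}f, vf, \nabla_v f\cdot\text{weight}$ are all integrable, so the boundary terms at $|v|\to\infty$ and $|x|\to\infty$ vanish. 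Thus $\frac{d}{dt}\iint f\,dv\,dx=0$, giving $\|f(t)\|_1=\|f^0\|_1$.

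\emph{Energy identity.} I will compute the time derivatives of the kinetic and field energies separately and then add. For the kinetic part, multiplying the Vlasov--Fokker--Planck equation by $|v|^2$ and integrating, the transport term $v_1\partial_x f$ drops by $x$-integration, while integration by parts in $v$ gives
$$\frac{d}{dt}\iint |v|^2 f\,dv\,dx = \iint 2v\cdot K\,f\,dv\,dx + \iint (\Delta_v |v|^2)\,f\,dv\,dx.$$
The cross terms $v_1 v_2 B - v_1 v_2 B$ in $v\cdot K$ cancel, leaving $2\int j\cdot E\,dx$ for the Lorentz contribution, and the Laplacian of $|v|^2$ is $4$, giving $4\|f(t)\|_1=4\|f^0\|_1$ using the conservation law just proved.

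For the field part, differentiating $\int(|E|^2+B^2)\,dx$ and using the Maxwell equations $\partial_t E_1=-j_1$, $\partial_t E_2 = -\partial_x B - j_2$, $\partial_t B = -\partial_x E_2$, an integration by parts turns the Poynting term $\int(E_2\partial_x B + B\partial_x E_2)\,dx$ into a total derivative that vanishes, leaving $-2\int j\cdot E\,dx$. Adding the two identities, the $\int j\cdot E\,dx$ terms cancel and one obtains
$$\frac{d}{dt}\left[\iint |v|^2 f\,dv\,dx + \int(|E|^2+B^2)\,dx\right] = 4\|f^0\|_1.$$
Integrating in time and using that the initial energy is finite (from $v_0^2 f^0\in L^1$, $E_2^0,B^0\in H^1$, and $E_1^0\in L^2$ which follows from $(E_1DAT)$, the neutralizing condition on $\phi$, and $\phi\in L^1$) yields the stated bound $C(1+t)$.

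\emph{Main obstacle.} The only real difficulty is justifying the integrations by parts in $v$: since Fokker--Planck destroys compact momentum support, I must check that the boundary terms at $|v|\to\infty$ genuinely vanish. This is why the hypotheses of Theorem 1.1 providing high-weight $L^2$ control on $f$ and $\partial_x f$, combined with the additional assumption $v_0^2 f^0\in L^1$ in Theorem 1.2, are essential. A standard density/cutoff argument---multiply by $\chi_R(v)\chi_R(x)$ and send $R\to\infty$, using the weighted bounds to control the commutator terms---makes each formal step rigorous.
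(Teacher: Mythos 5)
Your proof is correct and takes essentially the same route as the paper: integrate the Vlasov--Fokker--Planck equation to get mass conservation, then multiply by $|v|^2$, integrate by parts (using $\nabla_v\cdot K = 0$ and $\Delta_v|v|^2 = 4$), observe $v\cdot K = v\cdot E$, and cancel the resulting $2\int E\cdot j\,dx$ against the Poynting-type contribution from Maxwell's equations, leaving $\tfrac{d}{dt}[\text{energy}] = 4\|f^0\|_1$. The paper packages this as a local conservation law $\partial_t e + \partial_x m = 4\int f\,dv$ before integrating in $x$, but the computation is identical; your explicit note that $E_1^0\in L^2$ (via $(E_1DAT)$, neutrality, and $\phi\in L^1$) is a sound filling-in of a detail the paper leaves implicit.
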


Next, we state a lemma that will allow us to control $v_2$ moments of the particle distribution.

\begin{lemma} {\rm (Propagation of $v_2$-moments).} Let $p \in [0,\infty)$ be given and assume the hypotheses of Lemma 2.2 with $E_2^0, B^0 \in L^1(\mathbb{R}).$  Let $R(s) = \sqrt{1+s^2}$.  If $\|R(v_2)^p f^0\|_{\infty} < \infty$, then for any $t \in [0,T)$

$$
\| R(v_2)^p f(t)\|_{\infty} < C_T.
$$
\end{lemma}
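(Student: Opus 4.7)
The plan is to apply Lemma 2.1 to a weighted version $h = W^p f$ of $f$, where the weight $W$ is constructed from a magnetic potential that cancels the problematic $v_1 B$ term in $K_2 = E_2 - v_1 B$. A direct attempt with the naive weight $R(v_2)^p$ fails, because the source term produced by $K_2$ contains a factor of $v_1$ that cannot be controlled pointwise in $v$.

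First I would bound $\|B(t)\|_1$ and $\|E_2(t)\|_1$. Diagonalising the wave subsystem yields $\partial_t(B \pm E_2) \pm \partial_x(B \pm E_2) = \mp j_2$, so each of $B \pm E_2$ is propagated by a d'Alembert-type formula along the characteristics $x \mp t$. Taking $L^1$ norms and using Lemma 2.2 to estimate $\|j_2(s)\|_1 \leq \iint (1+v_2^2) f\,dv\,dx \leq C(1+s)$, together with the hypothesis $E_2^0, B^0 \in L^1(\mathbb{R})$, gives $\|B(t)\|_1 + \|E_2(t)\|_1 \leq C_T$. I then set $\psi(t,x) := -\int_{-\infty}^x B(t,y)\,dy$, so that $\partial_x \psi = -B$ and (using $E_2(t,\cdot) \in H^1$ to ensure decay at $-\infty$) $\partial_t \psi = E_2$; in particular $\|\psi(t)\|_\infty \leq \|B(t)\|_1 \leq C_T$. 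The structural point is that along any Vlasov characteristic $\frac{d}{dt}\psi(t,x(t)) = E_2 + v_1\partial_x\psi = E_2 - v_1 B = K_2$, so $v_2 - \psi$ is formally conserved in the collisionless case, and this cancellation survives in the diffusive setting.

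With $W(t,x,v_2) = \sqrt{1 + (v_2 - \psi)^2}$ and $h = W^p f$, a direct calculation using (VMFP) yields
$$\partial_t h + v_1 \partial_x h + K \cdot \nabla_v h + \frac{2p(v_2-\psi)}{W^2}\,\partial_{v_2} h - \Delta_v h = p\,f\, W^{p-4}\bigl[(p+1)W^2 - (p+2)\bigr],$$
the $v_1 B$ contributions on the right-hand side cancelling identically. The modified drift decomposes as $\mathcal{F} + \mathcal{B}\langle v_2,-v_1\rangle$ with $\mathcal{B} = B$ and $\mathcal{F} = (E_1,\, E_2 + 2p(v_2-\psi)/W^2)$; both are in $W^{1,\infty}((0,T)\times \mathbb{R}^3)$ on the local existence interval thanks to Step 1 and the local existence theorem. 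Since the source is dominated pointwise by $p(p+1)h$, Lemma 2.1 together with Gr\"onwall's inequality yields $\|h(t)\|_\infty \leq \|h_0\|_\infty e^{C_p t} \leq C_T$. Finally, the elementary bound $R(v_2)^p \leq C_p(W^p + \|\psi\|_\infty^p)$ combined with $\|f(t)\|_\infty \leq \|f^0\|_\infty$ (by Lemma 2.1 applied to $f$ itself with $g \equiv 0$) produces $\|R(v_2)^p f(t)\|_\infty \leq C_T$.

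The main obstacle is precisely the unbounded $v_1 B$ term in $K_2$; the magnetic-potential shift is the 1.5D analogue of the conservation law $v_2 - \psi = \mathrm{const}$ for collisionless Vlasov-Maxwell, and it is what allows the source term in the equation for $h$ to be controlled pointwise. A secondary subtlety is securing $\|B(t)\|_1 \leq C_T$, which relies on the 1D wave structure of the $(B,E_2)$ subsystem and is one of the reasons the methods do not immediately generalise to higher dimensions.
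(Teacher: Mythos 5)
Your proposal is correct and takes essentially the same approach as the paper: you introduce the magnetic potential (your $\psi$ is the paper's $-A$), exploit the cancellation $\partial_t\psi + v_1\partial_x\psi = K_2$ to form the weight $W = R(v_2-\psi) = R(v_2+A)$, derive the same modified VFP equation for $h = W^p f$ with a pointwise-controlled source, and close via Lemma 2.1 and Gr\"onwall. The only cosmetic difference is that you bound $\|\psi(t)\|_\infty$ directly by $\|B(t)\|_1$, whereas the paper bounds $\|A(t)\|_\infty$ via the d'Alembert representation of $A$; both are adequate on the finite interval $[0,T]$.
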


With control of velocities in the $v_2$ direction, we are able to control the induced electric and magnetic fields.  Bounds on moments of the particle density then follow from this result.

\begin{lemma} {\rm (Control of fields and moments).}  Assume there is $\delta > 4$ such that $v^{\delta}_0 f^0 \in L^{\infty}(\mathbb{R}^3), v^2_0 f^0 \in L^1(\mathbb{R}^3)$, and $B^0 \in L^1 (\mathbb{R})$.  Then, for any $t \in [0,T)$

\be
\|v^{\delta}_0 f(t)\|_{\infty} \leq C_T, \label{e2.2}
\ee

\be  \|E(t)\|_{\infty} + \|B(t)\|_{\infty} \leq C_T, \label{E2.3}
\ee

\noindent and

\be \left \| \dint v^{\beta - 2}_0 f(t)\, dv \right \|_{\infty} \leq C_T \label{E2.4}
\ee

\noindent for any $\beta \in [0,\delta)$.
\end{lemma}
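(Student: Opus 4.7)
The plan is to apply Lemma 2.1 to $h = v_0^\delta f$, bound the fields via the wave decomposition of Maxwell's equations, and close the resulting estimates through interpolation with Lemma 2.3. First, the bound on $E_1$ is immediate: $\partial_x E_1 = \rho$ together with the neutrality condition gives $E_1(t,x) = \int_{-\infty}^x \rho(t,y)\,dy$, so Lemma 2.2 yields $\|E_1(t)\|_\infty \leq \|\phi\|_1 + \|f^0\|_1$ uniformly on $[0,T)$. For $h = v_0^\delta f$, a direct computation using $\nabla_v\cdot K = 0$ and the crucial cancellation $K\cdot v = v_1 E_1 + v_2 E_2 = v\cdot E$ (the magnetic contributions $\pm v_1 v_2 B$ cancel) gives
\[
\partial_t h + v_1 \partial_x h + (K + 2\delta v_0^{-2} v)\cdot \nabla_v h - \Delta_v h = c(t,x,v)\, h,
\]
with $c = \delta v_0^{-2}(v\cdot E) + \delta^2 v_0^{-2} - \delta(\delta+2) v_0^{-4}$ satisfying $|c| \leq C(1+|E|)$. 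The drift decomposes as $\mathcal{F} + \mathcal{B}\langle v_2,-v_1\rangle$ with $\mathcal{F} = E + 2\delta v_0^{-2} v$ and $\mathcal{B} = B$, precisely the form required by Lemma 2.1, which then yields
\[
A(t) \leq A(0) + C\int_0^t (1+\|E(s)\|_\infty)\, A(s)\, ds, \qquad A(t) := \|v_0^\delta f(t)\|_\infty.
\]

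For the fields, the Riemann invariants $W_\pm = E_2\pm B$ satisfy $\partial_t W_\pm \pm \partial_x W_\pm = -j_2$, so integration along characteristics gives $\|E_2(t)\|_\infty + \|B(t)\|_\infty \leq C + 2\int_0^t \|j_2(s)\|_\infty\, ds$. To obtain a \textit{sublinear} bound on $\|j_2\|_\infty$ in terms of $A$, I interpolate $f \leq A(t)\, v_0^{-\delta}$ against Lemma 2.3's decay $f \leq C_T R(v_2)^{-p}$ (valid for any $p$) via $f \leq f^\theta f^{1-\theta}$. Integrating $|v_2|$ against $dv$, noting that the $v_1$-integral $\int v_0^{-\delta\theta}\,dv_1$ converges when $\delta\theta > 1$ and taking $p$ large enough for the remaining $v_2$-integral to converge, produces
\[
\|j_2(s)\|_\infty \leq C_T A(s)^\theta
\]
for any $\theta \in (1/\delta, 1)$ --- an interval that is nonempty thanks to the hypothesis $\delta > 4$. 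Substituting back yields the coupled nonlinear inequality
\[
A(t) \leq A(0) + C\int_0^t A(s)\Big(1 + \int_0^s A(\tau)^\theta\,d\tau\Big) ds.
\]

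The \textbf{main obstacle} is closing this inequality on the full interval $[0,T)$. I plan to do so by a continuation/bootstrap argument: supposing for contradiction a first failure time $T_\ast \in (0,T)$, local continuity of $A$ up to $T_\ast$ combined with the sublinearity $\theta < 1$ (and, if needed, refining $\theta$ closer to $1/\delta$ by taking larger $p$ in Lemma 2.3) shows that $A$ remains finite on a right neighborhood of $T_\ast$, contradicting the assumption. This produces (2.2); the bound (2.3) then follows immediately from the wave estimate. Finally, (2.4) is a direct integration: for $\beta \in [0,\delta)$,
\[
\int v_0^{\beta-2} f(t,x,v)\,dv \leq A(t) \int_{\mathbb{R}^2} v_0^{\beta-2-\delta}\,dv \leq C_T,
\]
the $\mathbb{R}^2$-integral converging exactly when $\delta - \beta > 0$.
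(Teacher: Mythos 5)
Your structure matches the paper's up to a point: the $E_1$ bound via conservation of mass, the Riemann-invariant wave decomposition for $E_2 \pm B$, the interpolation of $j_2$ against the $v_2$-moments from Lemma~2.3, and the use of Lemma~2.1 on $h = v_0^\delta f$ are all the right ingredients, and the algebraic identity you derive for $\mathcal{V}(v_0^\delta f)$ is exactly what the paper obtains. There is, however, a genuine gap at the very step you flag as the ``main obstacle,'' and the continuation/bootstrap argument you sketch does not repair it.

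The problem is that your bound on the source term is too coarse. You estimate $|c| \leq C(1+|E|)$ and feed this into Lemma~2.1 to get $A(t) \leq A(0) + C\int_0^t (1+\|E(s)\|_\infty)A(s)\,ds$. Combined with $\|E(s)\|_\infty \leq C_T(1 + \bar A(s)^\theta)$ (where $\bar A(t) = \sup_{s\le t} A(s)$), this yields $\bar A(t) \leq A(0) + C_T\int_0^t \bigl(1 + \bar A(s)^\theta\bigr)\bar A(s)\,ds$, whose integrand grows like $\bar A^{1+\theta}$. The model ODE $y' = Cy^{1+\theta}$ blows up in finite time, so this integral inequality simply does not preclude $A\to\infty$ before $T$; sublinearity of the exponent $\theta$ inside the bracket does not help, because the overall degree is $1+\theta > 1$, and ``no first failure time'' is exactly what such a superlinear inequality fails to guarantee. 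The paper avoids this by interpolating the source term $v_0^{\gamma-2}(v\cdot E)f$ \emph{as well}: it splits $v\cdot E = v_1 E_1 + v_2 E_2$, uses the already-bounded $E_1$, and then writes $v_0^{\gamma-2}|v_2|\,\|E_2\|_\infty f = \|E_2\|_\infty\,\bigl(|v_2|^{\gamma/2}f\bigr)^{2/\gamma}\bigl(v_0^\gamma f\bigr)^{1-2/\gamma}$, controlling the first factor by Lemma~2.3. This yields a source bounded by $C_T\bigl(F^{(\gamma-2)/\gamma} + F^{(\gamma-1+\varepsilon_1)/\gamma}\bigr)$ with $F(t) = \sup_{s\le t}\|v_0^\gamma f(s)\|_\infty$, and since $\varepsilon_1 \leq 1$ the exponents are $\leq 1$, so the resulting inequality $F(t) \leq F(0) + C_T\int_0^t(1+F(s))\,ds$ closes by Gronwall. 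In short: you interpolate once (to bound $j_2$, hence $E_2$) where the paper interpolates twice (once for $E_2$ and once more inside the source), and the second interpolation is what keeps everything linear. As a minor secondary point, Lemma~2.3 gives $\|R(v_2)^p f(t)\|_\infty \leq C_T$ only for $p$ with $\|R(v_2)^p f^0\|_\infty < \infty$, hence $p\leq\delta$ under the stated hypotheses, not ``any $p$''; this forces the constraint $p(1-\theta)>2$ with $p\leq\delta$, i.e.\ $\theta < 1 - 2/\delta$, so your admissible interval is $(1/\delta, 1-2/\delta)$ rather than $(1/\delta,1)$ -- still nonempty since $\delta>4$, but worth getting right.
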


Thus, once control of the fields is obtained, any higher moment of the particle distribution function can be controlled as well, assuming that the initial distribution possesses the same property.  Next, we utilize energy estimates to bound the density and its derivatives in $L^2 (\mathbb{R}^3)$.

\begin{lemma} Assume the hypotheses of Lemma 2.4 hold, then for every $t \in (0,T]$

$$\dfrac{d}{dt} \|f(t)\|^2_2 = -2 \|\nabla_v f(t)\|^2_2
$$

\noindent and thus

$$
\|f(t)\|_2 \leq \|f^0\|_2.
$$

\noindent If additionally, $v^{\gamma}_0 f^0 \in L^2 (\mathbb{R}^3)$ for some $\gamma > 0$, then

$$
\dfrac{d}{dt} \|v^{\gamma}_0 f(t)\|^2_2 \leq C_T \|v^{\gamma}_0 f(t)\|^2_2 - 2 \| v^{\gamma}_0 \nabla_vf(t)\|^2_2
$$

\noindent and thus

$$ \|v^{\gamma}_0 f(t) \|_2 \leq C_T$$

\noindent for every $t \in [0,T)$.
\end{lemma}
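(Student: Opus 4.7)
The plan is to derive both identities by multiplying the Vlasov–Fokker–Planck equation in (VMFP) by an appropriate test function and integrating by parts in $x$ and $v$, using the divergence-free structure of the force field $K$ together with the a priori bounds from Lemmas 2.2 and 2.4. All computations are formal at first; they can be justified by working with a mollified or truncated density (which is continuous and has the regularity stated in Theorem 1.1) and then passing to the limit.

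For the unweighted identity, multiply the Vlasov equation by $f$ and integrate over $\mathbb{R}^3$. The term $\iint f\,\partial_t f\,dv\,dx$ yields $\tfrac{1}{2}\frac{d}{dt}\|f\|_2^2$. The transport term $\iint v_1 f\,\partial_x f\,dv\,dx$ vanishes after integration by parts in $x$ since $v_1$ does not depend on $x$ and $f$ decays at infinity (thanks to the weighted moment bounds). The force term $\iint (K\cdot\nabla_v f)f\,dv\,dx$ vanishes because $\nabla_v\cdot K = \partial_{v_1}(E_1+v_2 B)+\partial_{v_2}(E_2-v_1 B)=0$, so integrating by parts in $v$ leaves no remainder. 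Finally, the diffusive term yields $-\|\nabla_v f\|_2^2$ after one integration by parts. This gives the identity $\tfrac{d}{dt}\|f\|_2^2 = -2\|\nabla_v f\|_2^2$ and hence the monotonicity $\|f(t)\|_2\le\|f^0\|_2$.

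For the weighted version, I would repeat the argument with test function $v_0^{2\gamma} f$. The time derivative produces $\tfrac{1}{2}\frac{d}{dt}\|v_0^\gamma f\|_2^2$ and the transport term still vanishes since $v_0^{2\gamma}$ is $x$-independent. The force term now yields a nontrivial boundary contribution: integrating by parts in $v$ and again using $\nabla_v\cdot K=0$, one obtains
\[
-\tfrac{1}{2}\iint K\cdot\nabla_v\!\left(v_0^{2\gamma}\right) f^2\,dv\,dx
= -\gamma\iint (K\cdot v)\,v_0^{2\gamma-2} f^2\,dv\,dx.
\]
The key algebraic cancellation is that $K\cdot v = (E_1+v_2B)v_1+(E_2-v_1B)v_2 = E\cdot v$, so the magnetic contribution drops out. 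Using the bound $|E\cdot v|\le\|E(t)\|_\infty\,v_0$ from (\ref{E2.3}) and $v_0\ge 1$, this term is dominated by $C_T\|v_0^\gamma f\|_2^2$. The diffusive term, after one integration by parts, produces $-\|v_0^\gamma\nabla_v f\|_2^2$ together with a cross term $-\tfrac{\gamma}{2}\iint v_0^{2\gamma-2} v\cdot\nabla_v(f^2)\,dv\,dx$, which after a second integration by parts becomes $\tfrac{\gamma}{2}\iint \nabla_v\cdot(v_0^{2\gamma-2} v)\,f^2\,dv\,dx$; since $\nabla_v\cdot(v_0^{2\gamma-2}v) = 2v_0^{2\gamma-2}+(2\gamma-2)v_0^{2\gamma-4}|v|^2$ is bounded by $Cv_0^{2\gamma-2}\le C v_0^{2\gamma}$, this too is absorbed into $C_T\|v_0^\gamma f\|_2^2$.

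Combining these estimates gives the differential inequality
\[
\tfrac{d}{dt}\|v_0^\gamma f(t)\|_2^2 \le C_T\|v_0^\gamma f(t)\|_2^2 - 2\|v_0^\gamma\nabla_v f(t)\|_2^2,
\]
and Gronwall's inequality then yields $\|v_0^\gamma f(t)\|_2\le C_T$ on $[0,T)$. The only real obstacle in the argument is verifying that all integrations by parts are legitimate: this requires enough decay in $v$, which is exactly what Lemma 2.4 provides through the pointwise bound $v_0^\delta f\in L^\infty$ with $\delta$ sufficiently large relative to $\gamma$. With that decay in hand, each step is a direct computation.
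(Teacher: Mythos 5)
Your argument follows the paper's proof of Lemma 2.5 essentially verbatim: multiply the VFP equation by $f$ (resp. $v_0^{2\gamma}f$), integrate, use the divergence-free structure $\nabla_v\cdot K=0$ and the identity $K\cdot v = E\cdot v$ to kill the magnetic contribution, bound the electric term with the $L^\infty$ field estimate from Lemma 2.4, integrate the diffusive cross term by parts once more, and close with Gronwall. There are two harmless bookkeeping slips that do not affect the conclusion: the displayed force-term identity should carry a $+\tfrac{1}{2}$ rather than $-\tfrac{1}{2}$ (the bound uses absolute values so the sign is irrelevant), and the diffusive cross term should be $-\gamma\iint v_0^{2\gamma-2}\,v\cdot\nabla_v(f^2)\,dv\,dx$ rather than $-\tfrac{\gamma}{2}\iint\cdots$, since $\nabla_v(v_0^{2\gamma})=2\gamma v_0^{2\gamma-2}v$ contributes an extra factor of $2$.
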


\begin{lemma} Assume the hypotheses of Lemma 2.4 hold with $\delta > 8$.  Then for every $\gamma \in \left( 2, \dfrac{\delta - 4}{2} \right) \cap \left( 2, \dfrac{a - 2 + \varepsilon}{2} \right]$ and $ t \in [0,T)$ we have

$$
\|v^{\gamma}_0 \partial_x f(t) \|_{L^2} + \| \partial_x E(t)\|_{L^2} + \| \partial_x B(t)\|_{L^2} \leq C_T.
$$
\end{lemma}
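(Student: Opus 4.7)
The plan is to close a coupled differential inequality for the three quantities
\[
Y(t) \;=\; \|v_0^{\gamma}\partial_x f(t)\|_2^{2} + \|\partial_x E_2(t)\|_2^{2} + \|\partial_x B(t)\|_2^{2},
\]
while disposing of $\|\partial_x E_1(t)\|_2$ separately by noting that $\partial_x E_1 = \rho = \int f\,dv - \phi$, so that $\|\partial_x E_1(t)\|_2$ is already bounded on $[0,T)$ using $\|f\|_1 = \|f^0\|_1$, the pointwise decay $\|v_0^{\delta}f\|_{\infty}\le C_T$ from Lemma~2.4, and $\phi\in L^2$. The initial data for $Y$ is finite because $\gamma\le (a-2+\varepsilon)/2$ gives $\|v_0^\gamma\partial_x f^0\|_2<\infty$ and $E_2^0,B^0\in H^1$.

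The main energy estimate is on $g:=\partial_x f$, which satisfies
\[
\partial_t g + v_1\partial_x g + K\cdot\nabla_v g - \Delta_v g \;=\; -\,\partial_x K\cdot\nabla_v f .
\]
Multiply by $v_0^{2\gamma}g$ and integrate in $(x,v)$. The $x$-transport term vanishes; the $K$-drift term integrates by parts using $\nabla_v\cdot K = 0$ to produce only $\gamma\iint v_0^{2\gamma-2}(E\cdot v)g^2\,dv\,dx$, which is bounded by $C_T\|v_0^\gamma g\|_2^2$ thanks to $\|E\|_\infty\le C_T$ from Lemma~2.4. The Fokker--Planck term yields the dissipative contribution $-\|v_0^\gamma\nabla_v g\|_2^2$ modulo a cross term absorbed via Cauchy--Schwarz into $\tfrac12\|v_0^\gamma\nabla_v g\|_2^2 + C\|v_0^\gamma g\|_2^2$. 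The main obstacle is the source $-\iint v_0^{2\gamma}g\,\partial_x K\cdot\nabla_v f\,dv\,dx$, since it involves both $\partial_x E$, $\partial_x B$ (the very quantities we want to estimate) and $\nabla_v f$ (which we control only after time integration). The trick is to integrate by parts in $v$, using $\nabla_v\cdot\partial_x K = \partial_x(\nabla_v\cdot K)=0$, to transfer the $\nabla_v$ onto either $g$ or $v_0^{2\gamma}$, producing the two terms
\[
\iint v_0^{2\gamma}(\nabla_v g\cdot\partial_x K)\,f\,dv\,dx
 + 2\gamma\iint v_0^{2\gamma-2}(v\cdot\partial_x E)\,g\,f\,dv\,dx,
\]
where I used the crucial identity $v\cdot\partial_x K = v\cdot\partial_x E$ (the $B$ contributions cancel in the dot product with $v$). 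Each integral is then estimated by Cauchy--Schwarz in $v$ against the weight $|\partial_x E(x)|+v_0|\partial_x B(x)|$, producing factors of the form $\bigl(\int v_0^{2\gamma+2}f^2\,dv\bigr)^{1/2}$ or $\bigl(\int v_0^{2\gamma-2}f^2\,dv\bigr)^{1/2}$. These $v$-integrals are bounded pointwise in $(t,x)$ because $f\le C_T v_0^{-\delta}$ and $\delta>\gamma+2$ follows from $\gamma<(\delta-4)/2$. A second Cauchy--Schwarz in $x$ then produces the bound
\[
|\text{source}| \;\le\; \tfrac{1}{2}\|v_0^{\gamma}\nabla_v g\|_2^{2} + C_T\bigl(\|v_0^{\gamma}g\|_2^{2} + \|\partial_x E\|_2^{2} + \|\partial_x B\|_2^{2}\bigr).
\]
Collecting terms gives
\[
\frac{d}{dt}\|v_0^{\gamma}g\|_2^{2} + \|v_0^{\gamma}\nabla_v g\|_2^{2} \;\le\; C_T\bigl(\|v_0^{\gamma}g\|_2^{2} + \|\partial_x E_2\|_2^{2} + \|\partial_x B\|_2^{2} + 1\bigr).
\]

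For the field half, differentiate the Maxwell equations $\partial_t E_2 = -\partial_x B - j_2$ and $\partial_t B = -\partial_x E_2$ in $x$, multiply by $\partial_x E_2$ and $\partial_x B$ respectively, add, and integrate. The wave-type cross terms cancel after one integration by parts, yielding
\[
\frac{d}{dt}\bigl(\|\partial_x E_2\|_2^{2} + \|\partial_x B\|_2^{2}\bigr) \;\le\; \|\partial_x E_2\|_2^{2} + \|\partial_x j_2\|_2^{2}.
\]
The remaining piece is to bound $\|\partial_x j_2\|_2$ by $\|v_0^{\gamma}g\|_2$: writing $\partial_x j_2 = \int v_2 g\,dv$ and applying Cauchy--Schwarz with weight $v_0^{2\gamma-2}$,
\[
|\partial_x j_2(t,x)|^{2} \;\le\; \Bigl(\int v_0^{-(2\gamma-2)}\,dv\Bigr)\int v_0^{2\gamma-2}v_2^{2}\,g^{2}\,dv \;\le\; C\!\int v_0^{2\gamma}g^{2}\,dv,
\]
where the first factor is finite precisely because $\gamma>2$ (this is exactly where the lower bound on $\gamma$ enters). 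Integrating in $x$ gives $\|\partial_x j_2\|_2^{2}\le C\|v_0^{\gamma}g\|_2^{2}$.

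Adding the two differential inequalities produces $Y'(t)\le C_T(Y(t)+1)$, and Gronwall on $[0,T)$ yields $Y(t)\le C_T$. Combined with the direct bound on $\|\partial_x E_1\|_2$, this is the claim. The main obstacle I anticipate is the source term in the $g$-equation; the integration-by-parts maneuver exploiting $\nabla_v\cdot K = 0$ and the algebraic cancellation $v\cdot\partial_x K = v\cdot\partial_x E$ is essential, since it eliminates the need for any pointwise-in-$t$ control of $\nabla_v f$ and keeps everything expressible in the quantities already bounded by Lemmas~2.2--2.5.
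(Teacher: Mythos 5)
Your proposal is correct and follows essentially the same approach as the paper: the weighted $L^2$ energy estimate on $\partial_x f$ with the crucial integration by parts that transfers $\nabla_v$ off $f$ (valid since $\nabla_v\cdot\partial_x K=0$), the wave-energy estimate for $\partial_x E_2,\partial_x B$ coupled through $\partial_x j_2$, the use of Lemma 2.4 to control $\int v_0^{2\gamma+2}f\,dv$ pointwise (requiring $\gamma<(\delta-4)/2$), the use of $\gamma>2$ to make $\int v_0^{2-2\gamma}\,dv$ finite, and the final coupled Gronwall all coincide with the paper's argument. Your two small departures — bounding $\|\partial_x E_1\|_2$ directly from the Gauss constraint $\partial_x E_1=\rho$ rather than including it in the Gronwall, and invoking the algebraic cancellation $v\cdot\partial_x K=v\cdot\partial_x E$ in the second source piece — are legitimate but unnecessary refinements, since the paper simply bounds $|\partial_x K|\le Cv_0(|\partial_x E|+|\partial_x B|)$ and includes $\partial_x E_1$ in the time-derivative estimate via $\partial_t E_1=-j_1$.
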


\begin{proof}  Now we may prove Theorem 1.2.  Applying Lemma 2.5 with $\gamma = \dfrac{a+2+\varepsilon}{2}$ yields

$$
\diint v^{a+2+\varepsilon}_0 f^2\, dv\,dx \leq C_T.
$$

\noindent Applying Lemma 2.6 with $\gamma = \dfrac{a-2+\varepsilon}{2} $ yields

$$
\diint v^{a-2+\varepsilon}_0 (\partial_x f)^2\, dv\,dx + \dint (|\partial_x E|^2 + (\partial_x B)^2)\, dx \leq C_T.
$$

\noindent Also by Lemma 2.2

$$\dint(|E|^2 +B^2)\,dx \leq C_T.
$$

\noindent Taking $(f(t),E_2(t), B(t))$ as an initial condition and applying Theorem 1.1 we find the solution may be extended to $[0,t+\change{\tau} ]$ with $\change{\tau} > C_T$.  This contradicts the maximality of $T$ and completes the proof.  \end{proof}

\section{Proofs of Lemmas and Estimates}  The first result (Lemma 2.1) is very close to a previous lemma \cite{12}, in which this property was shown for the relativistic Fokker-Planck operator.  One alteration necessary in the proof of [12, Lemma 1] is to change the relativistic velocity $\hat{v}_1$ to $v_1$, which does not affect the conclusion.  Also, here $F$ is not in $L^{\infty}$, but $\nabla_v\cdot F=\nabla_v\cdot\mathcal{F}$ and the proof of [12, Lemma 1] still applies.  Hence, we omit any additional details.

\begin{proof} {\rm [Lemma 2.2]}  We begin with conservation of mass.  Integrating the Vlasov equation over all $(x,v)$ we find

$$
\dfrac{d}{dt} \diint f(t,x,v)\, dv\,dx = 0.
$$

\noindent Thus, using the decay of $f^0$ we find for every $t \in [0,T)$

\be \diint f(t,x,v) \, dv dx = \diint f^0 (x,v) \, dv\, dx < \infty. \label{E3.1}
\ee

To arrive at the estimate of the total energy, we multiply the Vlasov equation by $|v|^2$ and integrate in $v$.  The Fokker-Planck term becomes

$$
\dint |v|^2 \Delta_v f\, dv = - \dint 2v \cdot \nabla_v f \, dv = 4 \dint f \, dv
$$

\noindent after two integrations by parts.  Hence, using the divergence structure of the Vlasov equation, we arrive at the local energy identity

\be
\partial_t e + \partial_x m = 4\dint f(t,x,v)\, dv \label{E3.2}
\ee

\noindent where

$$ e(t,x) = \dint |v|^2 f(t,x,v) \, dv + (|E(t,x)|^2 + |B(t,x)|^2)
$$

\noindent and

$$
m(t,x) = \dint v_1 |v|^2 f(t,x,v) dv + 2 E_2 (t,x) B(t,x).
$$

\noindent We integrate (\ref{E3.2}) over all space to deduce the global energy identity

$$\dfrac{d}{dt} \dint e(t,x) dx = 4 \diint f^0 (x,v) \, dx\, dv
$$

\noindent whence we find

$$
\dint e(t,x)\, dx \leq C(1+t)
$$

\noindent for all $t \in [0,T)$.  \end{proof}

Now we utilize the conservation laws to prove Lemma 2.3.

\begin{proof} {\rm [Lemma 2.3]} We begin by bounding the potential associated to the electric and magnetic fields.  By Lemma 2.2 we have

$$
\int \vert j_2(t,x) \vert \, dx \leq C(1+t)
$$

\noindent and hence

$$
\left \vert \int \int_0^t j_2(\tau, y\pm (t-\tau)) \, d \tau dy \right \vert \leq \int_0^t \int \vert j_2(\tau, y \pm (t - \tau)) \vert \, dy d\tau \leq C(1+t)^2.
$$

\noindent Since $B^0, E_2^0 \in L^1(\mathbb{R})$, it follows that

$$ \int \vert B(t,x) \vert \ dx \leq C(1+t)^2$$

\noindent and we may define

$$
A(t,x) = \dint^x_{-\infty} B(t,y) \, dy.
$$

\noindent Note that $\partial_x A = B$ and $\partial_t A = -E_2$.  Moreover, using Maxwell's equations, we find

$$
(\partial^2_t - \partial^2_x) A = j_2
$$

\noindent and thus

\be A(t,x) = \dfrac{1}{2} (A(0,x-t) + A(0, x+t)) + \dfrac{1}{2} \dint^t_0 \dint^{x+t-s}_{x-t+s}  \dint v_2 f(s,y,v) \, dv dy ds. \label{E3.3} \ee

\noindent The $(x,v)$-integral can be bounded using Cauchy-Schwarz and Lemma 2.2 as

$$\begin{array}{rcl} \dint^{x+t-s}_{x-t+s} \dint v_2 f(s,y,v) \, dv dy & \leq & \left( \diint f(x,y,v)\, dv dy\right)^{1/2} \left( \diint v^2_2 f(x,y,v) \, dv dy\right)^{1/2}\\
\\
& \leq & \|f^0 \|^{1/2}_1 \left( \diint |v|^2 f(s,y,v) \, dv dy \right)^{1/2}\\
\\
& \leq & C(1+s)^{1/2}.
\end{array}
$$

\noindent Hence, using the assumptions on initial data and integrating, we find

$$
\|A(t)\|_{\infty} \leq C (1+t)^{3/2} \leq C_T.
$$

Next, we utilize the identity

$$
\partial_t A + v_1 \partial_x A = -E_2 + v_1B = -K_2
$$

\noindent within the Vlasov-Fokker-Planck equation.  In particular, let $\psi \in C^2(\mathbb{R})$ be given and multiply this equation by $\psi (v_2 + A(t,x))$.  Denoting the VFP operator by

$$
{\cal V}h:= \partial_th + v_1 \partial_xh + K \cdot \nabla_vh - \Delta_v h,
$$

\noindent we find

\be {\cal V}(\psi(v_2 +A)f) = - 2 \psi' (v_2 + A) \partial_{v_2} f - \psi'' (v_2+A)f. \label{E3.4}
\ee

Next, define the function $R(x) = \sqrt{1+x^2}$.  To prove the first assertion, we use $\psi (x) = R^p(x)$ within (\ref{E3.4}) and derive the equation

$$
{\cal V}(R^p(v_2 + A)f) = -2 p(v_2 + A) R^{p-2} (v_2 + A) \partial_{v_2} f-p R^{p-4} (v_2+A)[1+(p-1)|v_2 + A|^2 ]f.
$$

\noindent Using the identity

$$
\partial_{v_2} f = R^{-p} (v_2 + A) \partial_{v_2} (R^p (v_2 + A) f) - p(v_2 +A)R^{-2} (v_2 + A)f
$$

\noindent the right side becomes

$$
-2p(v_2+A)R^{-2}(v_2+A)\partial_{v_2} (R^p(v_2 +A)f) + p R^{p-4} (v_2 + A) [ -1 + (p+1)|v_2 +A|^2]f.
$$

\noindent Hence, if this first term is included within the VFP operator by defining

$$ \overline{K} = K + \left\langle 0,2p \dfrac{v_2+A}{1+|v_2+A|^2} \right\rangle
$$

\noindent to form the new operator $\overline{\cal V}$, we find

$$
\overline{\cal V} (R^p(v_2+A)f) = pR^{p-4}(v_2+A) [-1 + (p+1)|v_2 + A|^2]f.
$$

\noindent We note that the term on the right side satisfies

$$
|pR^{p-4} (v_2 + A) [ -1 +(p+1) |v_2 + A|^2] f|\leq CR^{p-2} (v_2+A)f \leq CR^p(v_2+A) f.
$$

\noindent We invoke Lemma 2.1 with $h = R^p (v_2+A)f$ and ${\cal L} = \overline{\cal V}$ so that

$$
\|R^p(v_2+A(t))f (t) \|_{\infty} \leq \|R^p(v_2+A(0)) f^0 \|_{\infty} + C \dint^t_0 \| CR^p (v_2 + A(s))f(s)\|_{\infty}\, ds.
$$

\noindent By Gronwall's inequality we find

$$
\|R^p (v_2 + A(t))f (t) \|_{\infty} \leq C_T
$$

\noindent for $t \in [0,T)$.  Finally, the previously established control of $\|A(t)\|_{\infty}$ yields the first result as for $p \geq 0$

$$
\begin{array}{rcl} R^p(v_2)f(t,x,v) & = & (1+|v_2 + A(t,x) - A(t,x)|^2)^{p/2} f(t,x,v)\\
\\
& \leq & C (R^p(v_2+A(t,x)) + |A(t,x)|^p)f(t,x,v)\\
\\
& \leq & C\|R^p (v_2+A(t))f(t)\|_{\infty} + \| A(t)\|^p_{\infty} \|f(t)\|_{\infty}\\
\\
& \leq & C_T.
\end{array}
$$

\noindent Hence, taking supremums we find

$$
\|R^p(v_2)f(t)\|_{\infty} \leq C_T.
$$
\end{proof}

Using this result, we may bound the fields and moments of the distribution function.

\begin{proof} {\rm [Lemma 2.4]} We first bound $E_1$ using conservation of mass so that

$$\|E_1(t)\|_{\infty} = \underset{x \in \mathbb{R}}{\sup} \left| \dint^x_{-\infty} \left( \dint f(t,x,v)\, dv - \phi (x) \right) \, dx \right| \leq \diint f(t,x,v) \, dvdx + \|\phi\|_1 \leq C
$$

\noindent Next, we estimate the other field components.  Using the transported field equations, we find

\be
(E_2 \pm B)(t,x) = (E_2 \pm B)(0,x\mp t) - \dint^t_0 \dint v_2 f(s,x \mp (t-s),v)\, dvds. \label{E3.5}
\ee

\noindent Note that $E^0_2,B^0 \in L^{\infty}(\mathbb{R})$ by the Sobolev \change{embedding} theorem.   Thus, for any $\varepsilon_1, \varepsilon_2 > 0 $ we have

$$
\begin{array}{rcl} |(E_2 \pm B)(t,x)| & \leq & C\left(1 + \dint^t_0 \dint R^{-(1+\varepsilon_1)} (v_1) \left[ R^{1+\varepsilon_1}(v_1) f^{\frac{1+\varepsilon_1}{\gamma}} (s,x \mp (t-s), v)\right]\right.\\
\\
& & \left. |v_2| R^{-(2+\varepsilon_2)} (v_2) \left[ R^{2+\varepsilon_2} (v_2) f^{1-\frac{1+\varepsilon_1}{\gamma}} (s,x \mp (t-s),v)\right]\, dvds \right)\\
\\
& \leq & C \left( 1+ \dint^t_0 \|R^{\gamma} (v_1) f(s)\|^{\frac{1+\varepsilon_1}{\gamma}}_{\infty} \| R^q (v_2) f(s) \|^{\frac{\gamma -(1+\varepsilon_1)}{\gamma}}_{\infty} \, ds \right)
\end{array}
$$

\noindent where $q = \dfrac{(2+\varepsilon_2)\gamma}{\gamma - (1+\varepsilon_1)}$.  We choose $\gamma > 1 + \varepsilon_1$ and note that $\delta > 4$ ensures that we may also choose $q \leq \gamma \leq \delta$.  Define the function

$$
F(t) := \underset{s \in[0,t]}{\sup} \|v^{\gamma}_0 f(s)\|_{\infty}.
$$

\noindent Invoking Lemma 2.3 with $p = q$ we find

\be\begin{array}{rcl} \|(E_2 \pm B)(t)\|_{\infty}& \leq & C_T \left( 1 + \left[ \underset{s \in [0,t]}{\sup} \| R^{\gamma}(v_1) f(s)\|_{\infty} \right]^{\frac{1+\varepsilon_1}{\gamma}} \right)\\
\\
& \leq & C_T \left(1+F(t)^{\frac{1+\varepsilon_1}{\gamma}}\right). \end{array}
\label{E3.6}
\ee

\noindent Using the identity

$$
E_2(t,x) = \dfrac{1}{2} ([E_2(t,x) + B(t,x)] + [E_2(t,x)-B(t,x)])
$$

\noindent we see that the same bound holds for $\|E_2(t)\|_{\infty}.$

Next, we multiply VFP by $v^{\gamma}_0$ and use the same method as in the proof of Lemma 2.3 to derive the equation

\be
{\cal V}(v^{\gamma}_0 f) = \gamma v^{\gamma - 2}_0 (v \cdot E) f-2\gamma v^{-2}_0 v \cdot \nabla_v (v^{\gamma}_0 f) + \gamma(\gamma|v|^2 -2)v^{\gamma -4}_0 f. \label{E3.7}
\ee

\noindent If the second term on the right side is included within the VFP operator by defining

$$
\overline{K} = K + 2 \gamma \dfrac{v}{1+|v|^2}
$$

\noindent to form the new operator $\overline{\cal V}$, we find

$$
\begin{array}{rcl}
\overline{\cal V} (v^{\gamma}_0 f)& = & \gamma v^{\gamma -2}_0 (v \cdot E) f + \gamma(\gamma|v|^2-2) v^{\gamma -4}_0 f\\
\\
& =: & I + II. \end{array}
$$

\noindent Clearly,

$$
II \leq C \| v^{\gamma - 2}_0 f(t)\|_{\infty} \leq C \|v^{\gamma}_0 f(t) \|_{\infty} \leq CF(t).
$$

\noindent Estimating $I$ requires the field estimates, which yield

$$
\begin{array}{rcl}
I & \leq & C v^{\gamma -2}_0 (|v_1| + |v_2| \cdot \|E_2(t)\|_{\infty})f\\
\\
& \leq & C \left( \|v^{\gamma -1}_0 f(t) \|_{\infty} + C_T \||v_2|^{\frac{\gamma}{2}} f(t)\|^{\frac{2}{\gamma}}_{\infty}
\|v^{\gamma}_0 f(t) \|^{1-\frac{2}{\gamma}}_{\infty} \left(1 + F(t)^{\frac{1+\varepsilon_1}{\gamma}} \right) \right) \\
\\
& \leq & C_T \left( F(t) + F(t)^{\frac{\gamma - 2}{\gamma}} + F(t)^{\frac{\gamma -1 + \varepsilon_1}{\gamma}}\right)
\end{array}
$$

\noindent since $ \gamma/2 \leq \delta$.  We combine these estimates and invoke Lemma 2.1 with $h = v^{\gamma}_0 f$ and ${\cal L} = \overline{\cal V}$ so that

$$
\|v^{\gamma}_0 f(t) \|_{\infty} \leq \|v^{\gamma}_0f^0\|_{\infty} + C_T \dint^t_0 \left( F(t) + F(t)^{\frac{\gamma-2}{\gamma}} + F(s)^{\frac{\gamma-1+\varepsilon_1}{\gamma}} \, ds \right).
$$

\noindent Taking the supremum in $t$ and choosing $\varepsilon_1 \leq 1$

$$
F(t) \leq F(0) + C_T \dint^t_0 \left( F(s) + F(s)^{\frac{\gamma - 2}{\gamma}} + F(s)^{\frac{\gamma -1+\varepsilon_1}{\gamma}}\right) \, ds \leq F(0) + C_T \dint^t_0 (1+F(s))\, ds.
$$

\noindent Gronwall's inequality then yields the bound $F(t) \leq C_T$ for any $t \in [0,T)$ and $1 \leq \gamma \leq \delta$. The bound on moments of the distribution function follows immediately and the field bound

$$\|E_2(t) \|_{\infty} + \|B(t)\|_{\infty} \leq C_T
$$

\noindent then follows from (\ref{E3.6}).  Finally, using the bound on moments of the density, control of the $v$-integral follows since we have

$$
\dint v^{\gamma -2}_0 f(t,x,v)\, dv \leq \| v^{\delta}_0 f(t) \|_{\infty} \cdot \dint v^{-2-(\delta - \gamma)}_0 \, dv \leq C_T
$$

\noindent for $\gamma < \delta$ and taking the supremum in $x$ yields (\ref{E2.4}).
\end{proof}

\begin{proof} {\rm [Lemma 2.5]} We proceed by using dissipative estimates.  First, we compute:

$$
\begin{array}{rcl}
\dfrac{1}{2} \dfrac{d}{dt} \|f(t)\|^2_2 & = & \langle -v_1 \partial_x f - K \cdot \nabla_v f + \Delta_v f,f\rangle\\
\\
& = & - \langle v_1 \partial_x f,f \rangle - \langle K \cdot \nabla_v f,f \rangle + \langle \Delta_v f,f\rangle.
\end{array}
$$

\noindent Notice that the first two terms are pure derivatives in $x$ and $v$, respectively.  Thus,

$$
\langle v_1 \partial_x f,f \rangle = \dfrac{1}{2} \diint \partial_x (v_1 f^2) \, dvdx = 0
$$

\noindent and

$$
\langle K \cdot \nabla_v f,f\rangle = \dfrac{1}{2} \diint \nabla_v \cdot (Kf^2) \, dv dx = 0.
$$

\noindent Finally $\langle \Delta_v f,f\rangle = - \|\nabla_v f(t) \|^2_2$.  Hence

$$
\dfrac{d}{dt} \|f(t)\|^2_2 = - 2 \|\nabla_v f(t)\|^2_2 \leq 0
$$

\noindent and the first conclusion follows.

Similarly, we may multiply by $v^{2\gamma}_0$ and proceed in the same manner.  Within this estimate we will use $v_0 \geq 1$ in order to increase moments of the estimates where necessary so as to match the results of the lemma.  Computing the time derivative

$$
\begin{array}{rcl}  \dfrac{1}{2} \dfrac{d}{dt} \| v^{\gamma}_0 f(t) \|^2_2 & = & \diint v^{2\gamma}_0 f[-v_1 \partial_x f - K \cdot \nabla_v f + \Delta_v f ] \, dvdx\\
\\
& = & I + II + III.
\end{array}
$$

\noindent The first term vanishes as it is a pure $x$-derivative.  For $II$, we integrate by parts and use the field bounds of Lemma 2.4 so that

$$
\begin{array}{rcl}
II & = & -\dfrac{1}{2} \diint v^{2\gamma}_0 \nabla_v \cdot (Kf^2) \, dvdx\\
\\
& = & \gamma \diint v^{2\gamma-2}_0 v \cdot Ef^2\, dvdx\\
\\
& \leq & C_T \|v^{\gamma}_0 f(t)\|^2_2.
\end{array}
$$

\noindent To estimate $III$, we integrate by parts twice in the first term and once in the second term to find

$$
\begin{array}{rcl}
III & = & -\diint \nabla_v (v^{2\gamma}_0 f) \cdot \nabla_v f\, dvdx\\
\\
& = & - \diint (2\gamma v^{2\gamma-2 }_0 v f + v^{2 \gamma}_0 \nabla_v f ) \cdot \nabla_v f\, dvdx\\
\\
& \leq & C \|v^{\gamma}_0 f(t) \|^2_2 - \|v^{\gamma}_0 \nabla_v f(t) \|^2_2.
\end{array}
$$

\noindent Combining the estimates, we find

$$
\dfrac{d}{dt} \|v^{\gamma}_0 f(t) \|^2_2 \leq C_T \|v^{\gamma}_0f(t)\|^2_2 - 2\|v^{\gamma}_0 \nabla_v f(t)\|^2_2
$$

\noindent as in the statement of the lemma.  Additionally, because the second term on the right side of the inequality is nonpositive, we invoke Gronwall's inequality and find

$$
\| v^{\gamma}_0 f(t) \|^2_2 \leq C_T\|v^{\gamma}_0 f^0\|^2_2 \leq C_T
$$

\noindent for every $\gamma \geq 0$ for which the norm of the initial data $\|v^{\gamma}_0 f^0\|_2 $ is finite.
\end{proof}

\begin{proof} {\rm [Lemma 2.6]} If $f$ were $C^3$ we could compute

$$
\begin{array}{rcl}
\dfrac{d}{dt} \diint v^{2\gamma}_0 (\partial_x f)^2 \, dvdx & = & \diint 2v^{2\gamma}_0 \partial_x f(\Delta_v \partial_x f-v_1\partial^2_x f\\
\\
& & - K \cdot \nabla_v \partial_x f - \partial_x K \cdot \nabla_v f)\, dvdx\\
\\
& = & - 2 \diint v^{2\gamma}_0 |\nabla_v\partial_x f|^2 \, dvdx + \diint (\partial_x f)^2 (\Delta_v v^{2\gamma}_0 + K \cdot \nabla_v v^{2\gamma}_0 ) \, dvdx\\
\\
&& + 2 \diint f\partial_x K \cdot (v^{2\gamma}_0 \nabla_v \partial_x f+ \partial_x f \nabla_v v^{2\gamma}_0 )\, dv dx\\
\\
& \leq & -2 \diint v^{2\gamma}_0 |\nabla_v\partial_x f|^2 \, dvdx + C \diint (\partial_x f)^2 v^{2\gamma}_0 (1 + |E|) \, dvdx \\
\\
& & + C \sqrt{\diint f^2 |\partial_x K|^2 v^{2\gamma}_0 \, dv dx} \left( \sqrt{\diint v^{2\gamma}_0 |\nabla_v \partial_x f |^2 \, dvdx}\right.\\
 \\
 &&\left. + \sqrt{\diint v^{2\gamma}_0 (\partial_x f)^2 \, dvdx }\right).
\end{array}
$$

\noindent Using the inequalities $-x^2 + A x \leq \dfrac{1}{4}A^2$ and $2xy \leq x^2 + y^2$ yields

$$
\begin{array}{rcl}
\dfrac{d}{dt} \diint v^{2\gamma}_0 (\partial_x f)^2 \, dvdx & \leq & C\diint (\partial_x f)^2 v^{2\gamma}_0  (1+|E|) \, dvdx \\
\\
& & + C\diint f^2 |\partial_x K|^2 v^{2\gamma}_0 \, dvdx
\end{array}
$$

\noindent and hence

\be\begin{array}{rcl}  \diint v^{2\gamma}_0 (\partial_x f)^2 \, dvdx & \leq &C + C \dint^t_0 \diint v^{2\gamma}_0 (\partial_x f)^2 ( 1+|E|) \, dv dx d\tau\\
\\
& & + C \dint^t_0 \diint v^{2\gamma}_0 f^2 |\partial_x K|^2 \, dv dx d\tau.
\end{array}
\label{E3.8}
\ee

\noindent By a \change{standard} regularization argument it follows that (\ref{E3.8}) holds for the solution $(f,E,B)$ with the regularity stated in Theorem 1.1.  Applying (\ref{E2.3}) and (\ref{E2.4}) with $\beta = 2 \gamma + 4$ yields

\be
\begin{array}{rcl}
\diint v^{2\gamma}_0 (\partial_xf)^2 \, dvdx & \leq & C + C_T \dint^t_0 \diint v^{2\gamma}_0 (\partial_x f)^2 \, dv dx d\tau \\
\\
& & + C \dint^t_0 \dint (|\partial_x E|^2 + (\partial_x B)^2) \|f(t)\|_{L^{\infty}} \dint f v^{2\gamma + 2}_0 \, dv dx d\tau\\
\\
& \leq & C + C_T \dint^t_0 \diint v^{2 \gamma}_0 (\partial_x f)^2 \, dv dx d\tau\\
\\
& & + C_T \dint^t_0 \dint (|\partial_x E|^2 + (\partial_x B)^2) \, dx d\tau.
\end{array} \label{E3.9}
\ee

\noindent \change{Similarly,} if $E$ and $B$ were $C^2$ we could compute

$$
\begin{array}{rcl} \dfrac{d}{dt} \dint [ | \partial_x E|^2 + (\partial_x B)^2] \, dx & = & -2 \dint \partial_x E \cdot \partial_x j \, dx\\
\\
& \leq & 2 \sqrt{\dint |\partial_x E|^2 \, dx}\ \sqrt{\dint |\partial_x j|^2 \, dx} \end{array}
$$

\noindent so

\be
\begin{array}{rcl} \dint [|\partial_x E|^2 + (\partial_x B)^2 ]\, dx & \leq & C+ 2 \dint^t_0 \sqrt{\dint|\partial_x E|^2 \, dx} \ \sqrt{\dint|\partial_x j |^2 \, dx} \, d\tau \\
\\
  & \leq & C+\dint^t_0 \dint (|\partial_x E|^2 + |\partial_x j |^2 ) \, dx d \tau.
  \end{array} \label{E3.10}
  \ee

\noindent By a \change{standard} regularization argument it follows that (\ref{E3.10}) holds for $E$ and $B \in C^1$.  Since $\gamma > 2$ we have

$$
\begin{array}{rcl}
|\partial_x j|^2 & \leq & \left( \dint |\partial_x f|v_0 dv\right)^2\\
\\
& \leq & \dint |\partial_x f|^2 v^{2\gamma}_0 dv \dint v^{2-2\gamma}_0 dv \leq C \dint |\partial_x f|^2 v^{2 \gamma}_0\, dv
\end{array}
$$

\noindent so adding (\ref{E3.9}) and (\ref{E3.10}) yields

$$
\begin{array}{rl} & \diint v^{2\gamma}_0 (\partial_xf)^2 \, dvdx + \dint (|\partial_x E|^2 + (\partial_x B)^2 ) \, dx\\
\\
\leq & C + C_T \dint^t_0 \left( \dint |\partial_x E|^2 \, dx d\tau + \diint v^{2\gamma}_0 (\partial _x f)^2 \, dv dx \right)\, d\tau.
\end{array}
$$

\noindent An application of Gronwall's inequality completes the proof.
\end{proof}

\begin{proof} {\rm [Proposition 1.3]} If $f$ were $C^4$ we could compute the following:

$$\begin{array}{rcl}
\dfrac{d}{dt} \diint f^2\, dvdx & = & -2 \diint |\nabla_v f|^2\, dvdx,\\
\\
\dfrac{d}{dt} \diint |\nabla_v f|^2 \, dvdx & = & - 2 \diint (|\nabla^2_v f|^2 + \partial_{v_1} f \partial_x f)\, dv dx,\\
\\
\dfrac{d}{dt} \diint | \nabla^2_v f|^2 \, dvdx & = & - 2 \diint |\nabla^3_v f|^2 \, dvdx - 4 \diint \nabla_v \partial_{v_1} f \cdot \nabla_v \partial_xf\, dvdx\\
\\
& = & -2 \diint |\nabla^3_v f|^2 \, dvdx + 4 \diint \partial_x f \Delta_v \partial_{v_1} f\, dvdx,
\end{array}
$$

\noindent so

$$\begin{array}{rl}
& \dfrac{d}{dt} \diint (f^2 + t |\nabla_v f|^2 + \dfrac{1}{2} t^2 |\nabla^2_v f|^2) \, dv dx \\
\\
=  & - \diint |\nabla_v f|^2 \, dv dx - t \diint |\nabla^2_v f|^2 \, dv dx - 2 t \diint \partial_{v_1} f \partial_x f \, dv dx \\
\\
& + \dfrac{1}{2} t^2 \diint (-2|\nabla^3_v f|^2 + 4\partial_x f \Delta_v \partial_{v_1} f) \, dv dx\\
\\
\leq & - \diint |\nabla_v f|^2 \, dv dx + 2 t \sqrt{\diint (\partial_x f)^2 \, dvdx} \ \sqrt{ \diint (\partial_{v_1} f)^2\, dv dx}\\
\\
& + t^2 \left( - \diint |\nabla^3_v f|^2 \, dvdx + 2 \sqrt{\diint (\partial_xf)^2 \, dv dx}\ \sqrt{\diint (\Delta_v \partial_{v_1} f)^2 \, dvdx}\right).
\end{array}
$$

\noindent Using the inequality $-x^2 + A x \leq \dfrac{1}{4} A^2$ twice yields

$$
\begin{array}{rl} & \dfrac{d}{dt} \diint (f^2 + t |\nabla_v f|^2 + \dfrac{1}{2} t^2 |\nabla^2_v f|^2 ) \, dvdx \\
\\
\leq & Ct^2 \diint (\partial_x f)^2 \, dv dx
\end{array}
$$

\noindent and

\be \begin{array}{rl}
& \diint (f^2 + t |\nabla_v f|^2 + \dfrac{1}{2} t^2 |\nabla^2_v f|^2) \, dv dx\\
\\
\leq & \diint (f^0)^2 \, dvdx + C \dint^t_0 \tau ^2 \diint (\partial_x f)^2 \, dv dx d\tau. \end{array} \label{E3.11}
\ee

\noindent \change{Again, by a standard} regularization argument it follows that (\ref{E3.11}) holds for the solution constructed in Theorem 1.1.  By Lemma 2.6

$$
\diint (\partial_x f)^2 \, dvdx \leq C_t
$$

\noindent so the proposition follows from (\ref{E3.11}).
\end{proof}

\section{Local Existence}

Define $b = a-4$,

$$
{\cal F} = \left\{ f: \mathbb{R}^3 \ra [0, \infty) \biggr \vert \  v^{\frac{a}{2}}_0 f, \ v^{\frac{b}{2}}
 \partial_x f \in L^2 (\mathbb{R}^3)\right\}
 $$

\noindent and

$$
\| f\|_{\cal F} = \| v^{\frac{a}{2}}_0 f\|_{L^2} + \| v^{\frac{b}{2}}_0 \partial_x f \|_{L^2}.
$$

\noindent For the time being we consider smooth  initial data $(f^0, E^0_2, B^0)$.

Let $T\in (0,1), R> 1, \psi: \mathbb{R} \ra [0,1]$ be smooth with $s \leq -1 \Ra \psi(s) = 1$ and $s \geq 0 \Ra \psi(s) =0$.  Define $\psi^R (v) = \psi (|v|-R)$.  For $(E,B) \in C([0,T];\ H^1(\mathbb{R}))$ smooth define

$$
{\cal L}^R (E,B) = ( \tilde{f}, \tilde{E}, \tilde{B})
$$

\noindent by

\be K = E + \psi^R(v) B(v_2, -v_1), \label{E4.1}
\ee

\be \partial_t \tilde{f} + v_1 \partial_x \tilde{f} + K \cdot \nabla_v \tilde{f} = \Delta_v \tilde{f}, \qquad \tilde{f} (0, \cdot, \cdot) = f^0,
\label{E4.2}
\ee

\be \tilde{\rho} = \dint \tilde{f} dv - \phi , \ \tilde{j} = \dint v \tilde{f} \, dv,
\label{E4.3}
\ee

\be \tilde{E}_1 = \dint^x_{-\infty} \tilde{\rho}\, dy,
\label{E4.4}
\ee

\be \partial_t \tilde{E}_2 + \partial_x \tilde{B} = - \tilde{j}_2, \qquad  \partial_t \tilde{B} + \partial_x \tilde{E}_2 = 0,
\label{E4.5}
\ee

\be (\tilde{E}_2, \tilde{B}) (0, \cdot) = (E^0_2, B^0).\label{E4.6}
\ee

\noindent Note that $K$ is bounded and hence by Proposition A.1 of \cite{2}, (\ref{E4.2}) has a solution
$\tilde{f} \in L^2([0,T] \times \mathbb{R}; H^1(\mathbb{R}^2))$.  Reference \cite{16} may be used for this also.

Let $\alpha = a + 2 + \varepsilon,\ \beta = b + 2 + \varepsilon$, and

$$
C_0 > \|E^0_2\|_{H^1} + \|B^0\|_{H^1} + \|v^{\frac{\alpha}{2}}_0 f^0 \|_{L^2} + \| v^{\frac{\beta}{2}}_0 \partial_x f^0 \|_{L^2}.
$$

\noindent We assume that

\be \|(E,B)(t)\|_{H^1}\leq 10C_0 \label{E4.7}
\ee

\noindent on $[0,T]$. \change{Within the remainder of this section} constants may depend on $\alpha, T$ and $C_0$ but not on $R$ or $\nabla_v f^0$.

First, using (\ref{E4.7}) and the Sobolev \change{embedding} theorem,

\be \begin{array}{rcl} \dfrac{d}{dt} \diint v^{\alpha}_0 \tilde{f}^2 \, dvdx & = & -2 \diint v^{\alpha}_0 | \nabla_v \tilde{f}|^2\, dv dx\\
\\
& & + \diint \tilde{f}^2 (\Delta_v v^{\alpha}_0 + E \cdot \nabla_v v^{\alpha}_0 ) \, dv dx \\
\\
& \leq & C \diint \tilde{f}^2 v^{\alpha}_0 \, dv dx. \end{array}
 \label{E4.8}
\ee

\noindent Hence, by Gronwall's inequality

\be \diint v^{\alpha}_0 \tilde{f}^2 \, dv dx \leq C.\label{E4.9}
\ee

\noindent Similarly, and using the Cauchy Schwartz inequality

\be
\begin{array}{rcl}
\dfrac{d}{dt} \diint v^{\beta}_0 (\partial_x \tilde{f})^2 \, dv dx & = &-2 \diint v^{\beta}_0 | \nabla_v \partial_x \tilde{f}|^2 \, dvdx \\
\\
& & + \diint (\partial_x\tilde{f})^2 (\Delta_v v^{\beta}_0 + E \cdot \nabla_v v^{\beta}_0) \, dvdx \\
\\
& & + 2 \diint \tilde{f} \partial_x K \cdot ( v^{\beta}_0 \nabla_v \partial_x \tilde{f} + \partial_x \tilde{f} \nabla_v v^{\beta}_0 )\, dvdx\\
\\
& \leq & - 2 \diint v^{\beta}_0 |\nabla_v \partial_x \tilde{f}|^2 \, dv dx + C\diint (\partial_x \tilde{f})^2 v^{\beta}_0 \, dv dx \\
\\
& & + C \diint \tilde{f} (|\partial_x E| + | \partial_x B|) ( v^{\beta + 1}_0 |\nabla_v \partial_x \tilde{f}| + | \partial_x \tilde{f}| v^{\beta}_0 ) \, dv dx \\
\\
& \leq & - 2 \diint v^{\beta}_0 |\nabla_v \partial_x \tilde{f}|^2 \, dvdx + C \diint (\partial_x \tilde{f})^2 v^{\beta}_0 \, dv dx \\
\\
& & + C \sqrt{\diint \tilde{f}^2 v^{\beta + 2}_0(|\partial_x E|^2 + (\partial_x B)^2) \, dv dx}  \left[ \sqrt{\diint v^{\beta}_0 |\nabla_v \partial_x \tilde{f}|^2 \, dvdx } \right.\\
\\
& &\left.  + \sqrt{\diint v^{\beta}_0 (\partial_x \tilde{f})^2\, dv dx}\ \right].
\end{array} \label{E4.10}
\ee

\noindent Using $-x^2 + A x \leq \dfrac{1}{4} A^2$ and $2xy \leq x^2 + y^2$ yields

\be \begin{array}{rcl} \dfrac{d}{dt} \diint v^{\beta}_0 (\partial_x \tilde{f})^2\, dvdx & \leq  & C\diint v^{\beta}_0 (\partial_x \tilde{f})^2 \, dv dx \\
\\
&& + C \diint \tilde{f}^2 v^{\beta +2}_0 dv (|\partial_x E|^2 + (\partial_x B)^2)\, dx. \end{array}
\label{E4.11}
\ee

\noindent Also by (\ref{E4.9})

\be \begin{array}{rcl}  \dint \tilde{f}^2 v^{\beta +2}_0 dv & = & \dint \tilde{f}^2 v^{\frac{\alpha + \beta}{2}}_0 dv = \dint^x_{-\infty} \dint 2\tilde{f} \partial_x \tilde{f} v^{\frac{\alpha + \beta}{2}}_0 \, dvdy\\
\\
& \leq & 2 \|\tilde{f}(t) v^{\frac{\alpha}{2}}_0 \|_{L^2} \| \partial_x \tilde{f} (t) v^{\frac{\beta}{2}} \|_{L^2}\\
\\
& \leq & C + \diint v^{\beta}_0 (\partial_x \tilde{f})^2\, dvdx
\end{array}
\label{E4.12}
\ee

\noindent so using (\ref{E4.7}), (\ref{E4.11}) yields

$$
\dfrac{d}{dt} \diint v^{\beta}_0 (\partial_x \tilde{f})^2\, dvdx \leq C + C\diint v^{\beta}_0 (\partial_x \tilde{f})^2\, dvdx.
$$

\noindent Hence

\be
\diint v^{\beta}_0 (\partial_x \tilde{f})^2 \, dvdx \leq C. \label{E4.13}
\ee

Next consider $({\cal E},{\cal B}) \in C([0,T]; H^1(\mathbb{R}))$ smooth for which (\ref{E4.7}) holds and define $(\tilde{F}, \tilde{\cal E}, \tilde{\cal B}) = {\cal L}^{\cal R} ({\cal E}, {\cal B})$ where $R \leq {\cal R}$ and ${\cal K}, \tilde{F}, \tilde{P}, \tilde{J}, \tilde{\cal E},$ and $\tilde{\cal B}$ are defined as in equations (\ref{E4.1}) through (\ref{E4.6}).

Let $G = (E,B) - ({\cal E},{\cal B})$ and note that

\be
\begin{array}{rcl}
|K - {\cal K}| & \leq & v_0 |G| + v_0 |{\cal B}|\ |\psi^R(v) - \psi^{\cal R} (v)|\\
\\
& \leq & v_0 |G| + v^{1+\frac{\varepsilon}{2}} |{\cal B}| R^{-\frac{\varepsilon}{2}}
\end{array}
\label{E4.14}
\ee

\noindent and similarly

\be |\partial_x K - \partial_x {\cal K}| \leq v_0 | \partial_x G| + v^{1+\frac{\varepsilon}{2}}_0 | \partial_x {\cal B} | R^{-\frac{\varepsilon}{2}}. \label{E4.15}
\ee

Let $\tilde{g} = \tilde{f} - \tilde{F}$.  Proceeding as before in (\ref{E4.10}) and (\ref{E4.11}) we have

\be
\begin{array}{rcl} \dfrac{d}{dt} \diint v^{a}_0 \tilde{g}^2 \, dvdx & = & -2 \diint v^a_0 |\nabla_v \tilde{g}|^2 \, dvdx \\
\\
&& + \diint \tilde{g}^2 (\Delta_v v^a_0 + E \cdot \nabla_v v^a_0)\, dvdx \\
\\
&& + 2 \diint \tilde{F} (K-{\cal K}) \cdot (v^a_0 \nabla_v \tilde{g} + \tilde{g} \nabla_v v^a_0 ) \, dvdx\\
\\
& \leq & C \diint \tilde{g}^2 v^a_0 \, dvdx + C \diint \tilde{F}^2 |K-{\cal K}|^2 v^a_0 \, dvdx.
\end{array}
\label{E4.16}
\ee

\noindent By (\ref{E4.14}), the Sobolev \change{embedding} theorem, and (\ref{E4.9}) we have

$$
\begin{array}{rl} & \diint \tilde{F}^2|K-{\cal K}|^2 v^a_0 \, dvdx \leq \diint \tilde{F}^2 (G^2+{\cal B}^2
R^{-\varepsilon})v^{\alpha}_0 \, dvdx\\
\\
\leq & C(\|G(t)\|^2_{L^{\infty}} + \| {\cal B}(t) \|^2_{L^{\infty}}R^{-\varepsilon} ) \leq C (\|G(t)\|^2_{H^1} + R^{-\varepsilon}).
\end{array}
$$

\noindent Substitution into (\ref{E4.16}) and using Gronwall's inequality yields

\be \diint v^a_0\, \tilde{g}^2 \, dvdx \leq C \dint^t_0 \|G(\tau)\|^2_{H^1} d\tau + CR^{-\varepsilon} t. \label{E4.17}
\ee

Again proceeding as in (\ref{E4.10}) and (\ref{E4.11}) we have

\be
\begin{array}{rl} & \dfrac{d}{dt} \diint v^b_0 (\partial_x \tilde{g})^2 \, dvdx = -2 \diint v^b_0 | \nabla_v \partial_x \tilde{g}|\, dvdx\\
\\
& + \diint (\partial_x \tilde{g})^2 ( \Delta_v v^b_0 + K \cdot \nabla_v v^b_0) \, dvdx \\
\\
& + 2 \diint (\partial_x \tilde{F} (K - {\cal K}) + \tilde{g}\, \partial_x K + \tilde{F} \partial_x (K - {\cal K})) \cdot (v^b_0 \nabla_v \partial_x \, \tilde{g} + \partial_x \tilde{g}\nabla_v v^b_0) \, dvdx\\
\\
\leq & C \diint v^b_0 ( \partial_x \tilde{g})^2\, dvdx +  C \diint (\partial_x \tilde{F} )^2 |K-{\cal K}|^2 v^b_0\, dvdx\\
\\
& + C \diint \tilde{g}^2 |\partial_x K|^2 v^b_0 \, dvdx + C \diint \tilde{F}^2 | \partial_x (K-{\cal K}) | ^2 v^b_0 \, dvdx.
\end{array}
\label{E4.18}
\ee

\noindent By (\ref{E4.14}), (\ref{E4.13}) and \change{the} Sobolev \change{embedding theorem} we have

\be
\begin{array}{rl}
& \diint (\partial_x \tilde{F})^2 |K- {\cal K}|^2 v^b_0 \, dvdx \\
\\
\leq & C \diint (\partial_x \tilde{F})^2 (|G|^2 + |{\cal B}|^2 R^{-\varepsilon} ) v^{\beta}_0\, dvdx  \\
\\
\leq & C ( \|G(t)\|^2_{H^1} + \|{\cal B}(t)\|^2_{H^1} R^{-\varepsilon}) \leq C(\|G(t)\|^2_{H^1} + R^{-\varepsilon}).
\end{array}
\label{E4.19}
\ee

\noindent Note that (using (\ref{E4.17}))

\be
\begin{array}{rcl}
\dint \tilde{g}^2 v^{b+2}_0 dv & \leq & 2 \diint |\tilde{g}||\partial_x \tilde{g}| v^{\frac{a+b}{2}}_0 \, dvdx \\
\\
& \leq & 2 \left(\diint \tilde{g}^2 v^a_0 \, dvdx \right)^{\frac{1}{2}} \left(\diint (\partial_x \tilde{g})^2 v^b_0 \, dvdx \right)^{\frac{1}{2}}\\
\\
& \leq & C \dint^t_0 \|G(\tau)\|^2_{H^1} d \tau + CR^{-\varepsilon} t + \diint (\partial_x \tilde{g})^2 v^b_0 \, dvdx
\end{array}
\label{E4.20}
\ee

\noindent so by (\ref{E4.20}), (\ref{E4.7}), and \change{the} Sobolev \change{embedding theorem}

\be
\begin{array}{rl}
& \diint \tilde{g}^2 |\partial_x K|^2 v^b_0\, dvdx \leq \diint \tilde{g}^2 (|\partial_x E|^2 + (\partial_x B)^2)v^{b+2}_0\, dvdx \\
\\
\leq & C \dint^t_0 \|G(\tau)\|^2_{H^1} d\tau + CR^{-\varepsilon} t + C \diint (\partial_x \tilde{g})^2 v^b_0 \, dvdx.
\end{array}
\label{E4.21}
\ee

\noindent Using (\ref{E4.15}), (\ref{E4.12}), (\ref{E4.13}), and (\ref{E4.7}) we have

\be
\begin{array}{rl}
& \diint \tilde{F}^2 |\partial_x (K - {\cal K})|^2 v^b_0\, dvdx\\
\\
\leq & \diint \tilde{F}^2 (|\partial_x G|^2 + (\partial_x {\cal B})^2 R^{-\varepsilon}) v^{\beta}_0\, dvdx\\
\\
\leq & C \dint (|\partial_x G|^2 + (\partial_x {\cal B})^2 R^{-\varepsilon}) \, dx\\
\\
\leq & C \|G(t)\|^2_{H^1} + CR^{-\varepsilon}.
\end{array}
\label{E4.22}
\ee

\noindent Substitution of (\ref{E4.19}), (\ref{E4.21}), and (\ref{E4.22}) into (\ref{E4.18}) yields

$$
\begin{array}{l}
\dfrac{d}{dt} \diint v^b_0 (\partial_x \tilde{g})^2 \, dvdx \leq C \diint v^b_0 (\partial_x \tilde{g})^2\, dvdx\\
\\
\hspace*{.5in} + C \|G(t)\|^2_{H^1} + CR^{-\varepsilon} + C \dint^t_0 \|G(\tau)\|^2_{H^1}\, d\tau.
\end{array}
$$

\noindent By Gronwall's inequality we have

\be
\diint v^b_0 (\partial_x \tilde{g})^2 \, dvdx \leq C \dint^t_0 \|G(\tau)\|^2_{H^1}\, d\tau + CR^{-\varepsilon} t.
\label{E4.23}
\ee

Next we consider the fields.  We have

$$
\dfrac{d}{dt} \dint (|\tilde{E}|^2 + \tilde{B}^2 ) dx = - 2 \dint \tilde{E} \cdot \tilde{j} dx.
$$

\noindent By (\ref{E4.9})

$$
\dint |\tilde{j}|^2 dx \leq \dint \left( \dint \tilde{f} v^{\alpha}_0\, dv \right) \left(\dint v^{-\alpha}_0 |v|^2\, dv \right) dx \leq C
$$

\noindent so

$$
\dfrac{d}{dt} \dint \left( |\tilde{E}|^2 + \tilde{B}^2 \right)\, dx \leq C \left(\dint|\tilde{E}|^2dx \right)^{\frac{1}{2}}
$$

\noindent and

\be \dint \left(|\tilde{E}|^2 + \tilde{B}^2 \right) dx \leq C_0 + Ct \label{E4.24}
\ee

\noindent follows.  Similarly by (\ref{E4.13})

$$\dint |\partial_x \tilde{j}|^2 \, dx \leq \dint \left( \dint (\partial_x \tilde{f} )^2 v^{\beta}_0 dv \right) \left( \dint v^{2-\beta}_0 dv \right) \, dx \leq C
$$

\noindent so

$$
\begin{array}{rcl}
\dfrac{d}{dt} \dint \left( |\partial_x \tilde{E}|^2 + \left(\partial_x \tilde{B}\right)^2 \right)\, dx
& \leq & 2 \| \partial_x \tilde{E} (t)\|_{L^2} \| \partial_x \tilde{j}(t) \|_{L^2}\\
\\
& \leq & C \| \partial_x \tilde{E} (t) \|_{L^2}
\end{array}
$$

\noindent and

\be \dint \left( |\partial_x \tilde{E}|^2 + (\partial_x \tilde{B})^2 \right) dx \leq C_0 + Ct
\label{E4.25}
\ee

\noindent follows.  \change{In the same manner} (\ref{E4.17}) yields

$$
\begin{array}{rcl}
\dint |\tilde{j} - \tilde{J}|^2 dx & \leq & \dint \left( \dint \tilde{g}^2 v^a_0\, dv \right) \left( \dint v^{-a}_0 |v|^2\, dv \right)\, dx \\
\\
& \leq & C \dint^t_0 \|G(\tau)\|^2_{H^1} d\tau + CR^{-\varepsilon} t
\end{array}
$$

\noindent and, letting $\tilde{G} = (\tilde{E}, \tilde{B}) - (\tilde{\cal E}, \tilde{\cal B})$,

$$
\dint |\tilde{G}|^2 dx \leq C \dint^t_0 \|G(\tau)\|^2_{H^1} + CR^{-\varepsilon}t
$$

\noindent follows.  Lastly (\ref{E4.23}) yields

$$
\begin{array}{rcl}
\dint | \partial_x (\tilde{j}-\tilde{J}) |^2 \, dx & \leq & C \dint \left( \dint (\partial_x \tilde{g})^2 v^b_0\, dv \right) \left( \dint v^{2-b}_0 dv\right) \, dx \\
\\
& \leq & C \dint^t_0 \|G(\tau)\|^2_{H^1}\, d \tau + CR^{-\varepsilon}t
\end{array}
$$

\noindent and

$$ \dint |\partial_x \tilde{G} |^2 \, dx \leq C \dint^t_0 \|G(\tau)\|^2_{H^1} d\tau + CR^{-\varepsilon}t
$$

\noindent follows.

From (\ref{E4.24}) and (\ref{E4.25}) we have

$$
\| (\tilde{E}, \tilde{B}) (t) \|^2_{H^1} \leq 2 C_0 + CT \leq 10C_0
$$

\noindent for $T$ suitably restricted.  Also,

\be
\begin{array}{rl}
& \| (\tilde{f} - \tilde{F}) (t) \|^2_{\cal F} + \| (\tilde{E},\tilde{B})(t) - (\tilde{\cal E}, \tilde{\cal B}) (t)\|^2_{H^1}\\
\\
\leq & C \dint^t_0 \|(E,B)(\tau) - ({\cal E}, {\cal B})(\tau)\|^2_{H^1} d\tau + CR^{-\varepsilon} t.
\end{array}
\label{E4.26}
\ee

Define $(f^{n+1}, E^{n+1}, B^{n+1}) = {\cal L}^{2^n} (E^n,B^n)$ for $n \geq 0$ where $f^0,E^0,B^0$ are determined by the initial conditions.  By (\ref{E4.26}) we have

\be \begin{array}{rl}
& \| (f^{n+1} - f^n ) (t)\|^2_{\cal F} + \| (E^{n+1},B^{n+1}) (t) - (E^n , B^n) (t) \|^2_{H^1}\\
\\
\leq & C \dint^t_0 \|(E^n,B^n) (\tau) - (E^{n-1}, B^{n-1}) (\tau)\|^2_{H^1}\, d\tau + C 2^{-n\varepsilon}t.
\end{array}
\label{E4.27}
\ee

Suppose $A > 1$ and

$$
0 \leq x^{n+1} (t) \leq C \left( \dint^t_0 x^n(\tau) d\tau + A^{-n}t \right)
$$

\noindent for $n \geq 0$.  Then by induction

$$
\begin{array}{rcl}
x^n(t) & \leq & \left( \|x^0\|_{L^{\infty}}+1 \right) \dfrac{(Ct)^n}{n!} + A^{-n} \overset{n-1}{\underset{\ell =1}{\sum}} \dfrac{(CAt)^{\ell}}{\ell !}\\
\\
& \leq & \left(\| x^0\|_{L^{\infty}} + 1 \right) \dfrac{(CT)^n}{n!} + A^{-n} e^{CAT}.
\end{array}
$$

\noindent Since this bound is summable, it follows from (\ref{E4.27}) that $(E^n,B^n)$ is Cauchy in $C([0,T]; H^1)$ and $f^n$ is Cauchy in $C([0,T]; {\cal F})$.  Let $(f, E,B) = \change{\lim_{n \to \infty}} (f^n, E^n, B^n)$.

We will now use the explicitly known fundamental solution for the linear equation

$$
\partial_t f + v_1 \partial_x f = \Delta_v f,
$$

\noindent namely, for $0 \leq \tau < t,\ x,y \in \mathbb{R},v, w \in \mathbb{R}^2$

$$\begin{array}{rl}
& {\cal G}(t,x,v,\tau ,y,w)\\
 \\
=  & [ 4\pi(t-\tau)]^{-1}e^{\frac{-|v-w|^2}{4(t-\tau)}} \left[ \dfrac{\pi}{3} (t-\tau)^3\right]^{-\frac{1}{2}} \exp \left(-3 \dfrac{(x-y - \frac{1}{2} (t-\tau)(v_1+w_1))^2}{(t-\tau)^3}\right).
\end{array}
$$

\noindent This may be derived from line (2.5) of \cite{16} by letting $\beta \ra 0^+$ (with $N=2$) and then integrating in $y_2$.  It may also be derived directly by Fourier transform.  Since

$$
\left\{ \begin{array}{l} \partial_t f^{n+1} + v_1 \partial_x f^{n+1} = \Delta_v f^{n+1} - K^n \cdot \nabla_v f^{n+1}\\
\\
f^{n+1} (0, \cdot, \cdot) = f^0
\end{array} \right.
$$

\noindent it follows by Theorems II.2 and II.3 of \cite{16} that

$$
\left. f^{n+1} = H + \dint^t_0 \diint {\cal G}(t,x,v,\tau ,y,w)(-K^n \cdot \nabla_w f^{n+1})\right|_{(\tau , y,w)} \, dw dy d\tau
$$

\noindent where

$$
H(t,x,v) = \diint {\cal G} (t,x,v,0,y,w) f^0 (y,w) \, dw dy.
$$

\noindent It is easy to check that

$$\diint \left| \nabla_w {\cal G}(t,x,v,\tau , y, w) \right|\, dwdy \leq C(t-\tau)^{-\frac{1}{2}}
$$

\noindent and it follows that

$$
\left. f^{n+1} = H + \dint^t_0 \diint \nabla_w {\cal G} (t,x,v,\tau , y, w) \cdot (K^n f^{n+1}) \right|_{(\tau , y,w)} \, dw dy d\tau.
$$

%
%
%
%
%
%

\noindent By Lemma 2.1

$$
0 \leq f^n \leq \sup f^0
$$

\noindent so

\be 0 \leq f \leq \sup f^0
\label{E4.29}
\ee

\noindent follows.

Thus far we have assumed $(f^0, E^0_2, B^0)$ to be smooth.  Now consider $ (f^0, E^0_2, B^0)$ as in Theorem 1.1.  Consider a sequence $(f^{0k}, E^{0k}_2, B^{0k})$ of smooth initial conditions with

$$
\| v^{\frac{\alpha}{2}}_0 (f^{0k} - f^0 ) \|_{L^2} + \| v^{\frac{\beta}{2}}_0 \partial_x (f^{0k} - f^0) \|_{L^2} + \| (E^{0k}_0,B^{0k} ) - (E^0_2, B^0) \|_{H^1} \ra 0.
$$

\noindent By a limiting procedure like the above we conclude that (\ref{E4.29}) holds for $(f^0, E^0_2, B^0)$ as in Theorem 1.1.

By Theorem II.3 of \cite{16} $H \in C([0, \infty) \times \mathbb{R}^3) \cap C^1 ((0,\infty) \times \mathbb{R}^3)$ with $\partial_{v_i} \partial_{v_j} H$ continuous on $t > 0$ and

$$
\left\{ \begin{array}{rcl} \partial_t H + v_1 \partial_x H & = & \Delta_v H\\
\\
 H(0,\cdot, \cdot) & = & f^0.\end{array} \right. 
 $$

Next, we will show that $f$ is differentiable in $v$ and H\"{o}lder continuous in $x$.
Proceeding  as in (4.16)  we have
\begin{equation}
\begin{array}{lll}
\frac{d}{dt} \iint v^a_0 \left(f^{n+1}-f^{\ell+1}\right)^2 dvdx = -2 \iint v^{a}_0
\left | \nabla_v\left(f^{n+1}-f^{l+1}\right) \right|^2 dvdx \\
\hspace*{2in} + \iint \left(f^{n+1}-f^{\ell+1}\right)^2 \left(\Delta_v v^a_0 + E^n \cdot \nabla_v v^a_0\right) dvdx  \\
\hspace*{2in} + 2 \iint f^{\ell+1 }\left(K^n-K^{\ell} \right)  \cdot \left(v^a_0 \nabla_v\left(f^{n+1}-f^{\ell+1}\right) \right. \\
\hspace*{2in} +  \left.  \left(f^{n+1}-f^{\ell+1}\right) \nabla_vv^a_0\right) dvdx.  \label{Eq2}
\end{array}
\end{equation}
We will use the notation
$$\sigma^{n,l} \leq o^{n,l}$$
to mean $\forall \epsilon>0 \exists N $ such that $ n, \ell > N \Rightarrow \sigma^{n,l} \leq \epsilon$.  Recall that
$$
\begin{array}{lll}
\underset{t}{\sup} \left|\left| \left(E^nB^n\right) - \left(E^{\ell},B^{\ell} \right)
 \right| \right|_{H^1(\mathbb{R})} + \\
\nonumber \\
\hspace*{.25in} +  \underset{t}{\sup} \iint v^a_0 \left(f^{n+1}-f^{\ell+1}\right)^2 dvdx \leq o^{n,\ell}. \nonumber
\end{array}
$$
By (4.14) we have
$$\begin{array}{lll}
\left|K^n-K^{\ell}\right|&  \leq &  v_0 \left|\left(E^n,B^n\right)-\left(E^{\ell},B^{\ell}\right) \right| + v^{1 + \epsilon/2}_0 \left|B^{\ell}\right| 2^{-n \epsilon/2}\\
& \leq & v^{1 + \epsilon/2}_0 o^{n,\ell}\end{array}
$$
where we assume $\ell > n$.  

Recall that $\alpha = a+2 + \epsilon$ and note that
$$\begin{array}{lll}
\iint\left|f^{\ell + 1} \left(K^n-K^{\ell}\right) \cdot v^a_0 \nabla_v\left(f^{n+1}-f^{\ell +1}\right)\right| dvdx\\
 \leq o^{n,\ell} \iint \left|f^{\ell+1}\right|v_0^{a+1+ \epsilon/2} \left| \nabla_v \left(f^{n+1}-f^{\ell+1}\right)\right|  dvdx  \\
 \\
 \leq o^{n,\ell} \sqrt{\iint v^{\alpha}_o\left(f^{\ell+1}\right)^2dvdx} \ \sqrt{\iint v^a_0\left| \nabla_v\left(f^{n+1}-f^{\ell + 1}\right) \right|^2 dvdx}\\
\\
\leq o^{n,\ell} \sqrt{\iint v^a_0 \left| \nabla_v\left(f^{n+1}-f^{\ell +1}\right)\right|^2 dvdx}.
\end{array}
$$
Similarly,
$$\begin{array}{lll}
\iint\left|f^{\ell+1} \left(K^n-K^{\ell}\right) \cdot \left(f^{n+1}-f^{\ell+1}\right) \nabla_vv^a_0\right|dvdx\\
\\
\leq o^{n,\ell} \sqrt{\iint v^a_0 \left(f^{n+1}-f^{\ell+1}\right)^2dvdx} \leq o^{n,\ell}. \end{array}
$$
So, by \eqref{Eq2} we find
$$\begin{array}{lll}
\frac{d}{dt} \iint v^a_0 \left(f^{n+1}-f^{\ell+1}\right)^2 dvdx \leq -\iint v^a_0\left|\nabla_v\left(f^{n+1}-f^{\ell+1}\right)\right|^2 dvdx\\
\\
+ o^{n,\ell} +   \left[o^{n,\ell}\sqrt{\iint v^a_0\left|\nabla_v\left(f^{n+1}-f^{\ell+1}\right)\right|^2}dvdx\right.\\
 \\
 -  \left.  \iint v^a_0 \left|\nabla_v\left(f^{n+1}-f^{\ell+1}\right) \right|^2dvdx \right]\\
\\
\leq o^{n,\ell} - \iint v^a_0\left|\nabla_v\left(f^{n+1}-f^{\ell+1}\right)\right|^2dvdx. \end{array}
$$
It follows that 
\begin{equation} \label{Eq3}
\int^T_0 \iint v^a_0 \left| \nabla_v \left(f^{n+1}-f^{\ell+1}\right)\right|^2 dvdxd\tau \leq o^{n, \ell}. 
\end{equation} 
In a similar manner we may show that 
\begin{equation} \label{Eq4} 
\int^T_0 \iint v^{\alpha}_0 \left|\nabla_v f^{n+1}\right|^2 dvdxd\tau \leq C. 
\end{equation} 
Note that the exponent of $v_0$ in \eqref{Eq3} is $a$, but in \eqref{Eq4} it is $\alpha$.  

Next, we derive $L^p$ bounds on ${\cal G}$.  Considering $0 \leq \tau \leq t \leq T$ and letting $p \geq 1,\ b, \theta \geq 0$ and 
$$u = \frac{v-w}{\sqrt{t-\tau}} \ \ z = \frac{x-y- \frac{t-\tau}{2}(v_1+w_1)}{(t-\tau)^{3/2}}$$ 
we have 
$$\begin{array}{llll}
 \iint w^{-b \theta}_0 {\cal G}^pdwdy\\
=  C \iint \left(\sqrt{1+|v - \sqrt{t- \tau} u|^2}\right)^{-b \theta} \left[(t-\tau)^{-5/2}e^{-|u|^2/4}e^{-3z^2}\right]^p (t-\tau)^{5/2} dzdu\\
\\
= C(t-\tau)^{\frac{5}{2}(1-p)} \left({\int}_{|u| < \frac{1}{2}(t- \tau)^{-1/2}|v|}
{\left(\sqrt{1+|v- \sqrt{t-\tau}u|^2}\right)}^{-b \theta}e^{-p|u|^2/4} du  \right. \\
\\
 \left. + \int_{|u| > \frac{1}{2}(t- \tau)^{-1/2}|v|} \left(\sqrt{1+|v-\sqrt{t-\tau}u|^2}\right)^{-b \theta}e^{-p|u|^2/4} du\right)\\
\\
\leq C (t-\tau)^{\frac{5}{2}(1-p)} \left[\left(\sqrt{1+\left(\frac{1}{2}|v|\right)^2}\right)^{-b \theta} \int e^{-|u|^2/4} du + e^{-\frac{1}{8}\left(\frac{|v|}{2 \sqrt{t-\tau}}\right)^2} \int e^{-|u|^2/8} du \right] \\
\\
\leq C(t- \tau)^{\frac{5}{2}(1-p)}\left[\left(\sqrt{1+|v|^2}\right)^{-b \theta} + e^{-C|v|^2}\right] \\
\\
\leq C(t-\tau)^{\frac{5}{2}(1-p)}v^{-b \theta}_0. \end{array} 
$$
So for $1 \leq p < 7/5$ we have 
\begin{equation} \label{Eq5} 
\int^t_0 \iint w^{-b \theta}_0 {\cal G}^p dwdy d \tau \leq C v^{-b \theta}_0. 
\end{equation} 
Here, constants may depend on $p,b$, and $\theta$.  Later, specific choices of $p,b$, and $\theta$ are used and this dependence is removed. 

Next, we bound $|\nabla_w {\cal G}|$ and $|\nabla_v{\cal G}|$.  Note that 
$$\begin{array}{lll} 
\iint w^{-b \theta}_0 \left(\frac{|v-w|}{t-\tau}{\cal G}\right)^p dwdy\\
= C \iint \left(\sqrt{1+|v- \sqrt{t-\tau}u|^2}\right)^{-b\theta}
\left[\frac{|u|}{\sqrt{t-\tau}}(t-\tau)^{-5/2}e^{-|u|^2/4}e^{-3z^2}\right]^p(t-\tau)^{5/2}dzdu \\
= C(t-\tau)^{\frac{5}{2}-3p}\int \left(\sqrt{1+|v-\sqrt{t-\tau}u|^2}\right)^{-b \theta}\left[|u|e^{-|u|^2/4}\right]^pdu\\
\\
\leq C(t-\tau)^{\frac{5}{2} -3p}v^{-b \theta}_0 \end{array} 
$$
and similarly, 
$$
\iint w^{-b \theta}_0 \left( \frac{\left|x-y-\frac{t-\tau}{2} (v_1+w_1)\right|}{(t-\tau)^2}{\cal G}\right)^p dwdy
\leq C(t-\tau)^{\frac{5}{2}-3p}v_0^{-b \theta}.
$$
Hence,
$$\iint w_0^{-b \theta}\left(|\nabla_w {\cal G}|^p+|\nabla_v{\cal G}|^p \right) dwdy \leq C (t-\tau)^{\frac{5}{2}-3p}v_0^{-b \theta} $$
and for $1 \leq p < 7/6$ 
\begin{equation} \label{Eq6} 
\int^t_0 \iint w_0^{-b \theta} \left(|\nabla_w {\cal G}|^p + |\nabla_v{\cal G}|^p\right)dwdyd \tau\leq C v_0^{-b \theta}.
\end{equation}

In a very similar manner it may be shown that 
\begin{equation} \label{Eq7} 
 \int^T_0 \iint v_0^{-b\theta}{\cal G}^p dvdxdt \leq C w_0^{-b \theta} 
\end{equation}
for $p < 7/5$ and
$$ \int^T_0 \iint v_0^{-b \theta}\left(|\nabla_v {\cal G}|^p+|\nabla_w {\cal G}|^p\right) dvdxdt \leq Cw_0^{-b \theta}$$
for $p<7/6.$

Next we derive two inequalities that will be used repeatedly.  Let $p, q \in [1, \infty)$ and $r \in [1, \infty]$ satisfy 
$$\frac{1}{p} + \frac{1}{q} -1 = \frac{1}{r}. $$ 
We will first consider the case $ r \neq \infty$, but what follows may be easily adapted to the case $r = \infty$.  Let $\theta \geq 0$ and define 
$$ b = \left( \frac{1}{p} - \frac{1}{r}\right)^{-1} \hspace*{.25in} c = \left( \frac{1}{q} - \frac{1}{r} \right)^{-1} $$
and note that 
$$ \frac{1}{r} + \frac{1}{b} + \frac{1}{c} = 1. $$ 
By H\"older's inequality and using \eqref{Eq5}, we have for $p < 7/5$ and any $h (\tau,y,w) \geq 0$
$$\begin{array}{llll}
&& \int^t_0 \iint {\cal G} hdwdyd \tau\\
\\
& = & \int^t_0\iint\left[{\cal G}^{p/r}(w^{\theta}_0h)^{q/r}\right] \left[{\cal G}^{p\left(\frac{1}{p}-\frac{1}{r}\right)} w_0^{-\theta}\right] \left[w_0^{\theta}h\right]^{q \left(\frac{1}{q}-\frac{1}{r}\right)}dwdyd \tau\\
& \leq & \left(\int^t_0 \iint {\cal G}^p\left(w_0^{\theta}h\right)^qdwdyd \tau\right)^{1/r} 
 \left(\int^t_0 \iint {\cal G}^pw_0^{-b \theta}dwdyd \tau \right)^{1/b} \\
&&\hspace*{0.1in}  \left( \int^t_0 \iint \left(w_0^{\theta}h\right)^qdwdyd \tau \right)^{1/c} \\
& \leq & Cv^{-\theta}_0 \left( \int^t_0 \iint {\cal G}^p\left(w_0^{\theta}h\right)^q dwdy d \tau \right)^{1/r} \left( \int^t_0\iint \left(w_0^{\theta}h\right)^q dwdy d \tau \right)^{1/c}. \end{array} 
$$
Hence, using \eqref{Eq7} we have, for $p < 7/5$
\begin{equation} \begin{array}{lll} \label{Eq8} 
\left[\int^T_0 \iint \left(v^{\theta}_0 \int^t_0 \iint {\cal G}hdwdyd\tau\right)^r dvdxdt\right]^{1/r} \\
\leq C \left[ \int^T_0 \iint \left(\int^t_0 \iint {\cal G}^p \left(w^{\theta}_0h\right)^q dwdyd \tau\right) \right.\\
\hspace*{.1in}\left.  \left( \int^t_0 \iint \left(w^{\theta}_0h\right)^q dwdy d \tau \right)^{r/c} dvdxdt  \right]^{1/r} \\
\leq C \left[\left(\int^T_0 \iint \left(w^{\theta}_0h\right)^q dwdyd \tau \right)^{r/c} \right. \\
\hspace*{.1in}\left.  \int^T_0 \iint \left(\int^T_0 \iint {\cal G}^pdvdxdt\right)\left(w^{\theta}_0h\right)^q dwdyd \tau\right]^{1/r}\\
\leq C \left[\left(\int^T_0 \iint \left(w^{\theta}_0h\right)^q dwdy d \tau \right)^{1+ r/c}\right]^{1/r} \\
= C \left[ \int^T_0 \iint \left(w^{\theta}_0h \right)^q dwdyd \tau \right]^{1/q}. \end{array} 
\end{equation}
Similarly, using \eqref{Eq6} we find for $p < 7/6$
\begin{equation} \begin{array}{lll} \label{Eq9} 
\left[ \int^T_0 \iint \left(v^{\theta}_0 \int^t_0 \iint \left[|\nabla_w{\cal G}|+|\nabla_v{\cal G}|\right] hdwdyd \tau \right)^r dvdxdt \right]^{1/r} \\
\leq C \left( \int^T_0 \iint \left(w^{\theta}_0h\right)^q dwdy d \tau \right)^{1/q}. \end{array} 
\end{equation} 

Now we will show that $f^n$ converges in $L^{\infty}$.  We have 
\begin{equation} \label{Eq10} 
f^{n+1}-f^{\ell+1}= - \int^t_0 \iint {\cal G}\left(K^n \cdot \nabla_vf^{n+1}-K^{\ell} \cdot \nabla_vf^{\ell +1}\right) (\tau,y,w) dwdyd \tau. 
\end{equation} 
Using (4.14) and taking $\ell > n$
\begin{equation} \begin{array}{llll} \label{Eq11} 
\left|K^n \cdot \nabla_v f^{n+1}- K^{\ell} \cdot \nabla_v f^{\ell+1}\right| \leq \left|K^n-K^{\ell}\right| \left|\nabla_vf^{n+1}\right|\\
 \hspace*{.25in}  + \left|K^{\ell}\right| \left|\nabla_v \left(f^{n+1}-f^{\ell +1}\right)\right|\\
 \leq \left|\left(E^n,B^n\right)-\left(E^{\ell},B^{\ell}\right)\right| w_0 \left|\nabla_vf^{n+1}\right| \\
 + Cw_0^{1+ \epsilon/2}2^{-n \epsilon/2} \left|\nabla_vf^{n+1}\right| + C w_0 \left|\nabla_v
 \left(f^{n+1}-f^{\ell + 1}\right) \right|\\
 \leq o^{n, \ell}w_0^{1+ \epsilon/2}\left|\nabla_vf^{n+1}\right| + Cw_0\left|\nabla_v\left(f^{n+1}-f^{\ell+1}\right) \right|. \end{array} 
 \end{equation} 
Next, we will apply \eqref{Eq8} with $p < 7/5$ and $q=2$.
Note that by taking $p$ close to $7/5$, we may make $r = \left( \frac{1}{p} + \frac{1}{q} -1 \right)^{-1}$ close to 
$$\left[\frac{5}{7} + \frac{1}{2} -1 \right]^{-1} = 14/3. $$
Thus, applying \eqref{Eq8} with $p < 7/5,\ q = 2, \ {\theta} = \frac{a}{2}-1$, and 
$$h = w_0\left| \nabla_v\left(f^{n+1}-f^{\ell+1}\right) \right|$$
while using \eqref{Eq3} yields
$$\begin{array}{lll}
\left[\int^T_0 \iint \left(v^{\frac{a}{2}-1}_0 \int^t_0 \iint {\cal G} w_0 \left| \nabla_v\left(f^{n+1}- f^{\ell+1} \right) \right| dwdy d \tau \right)^r dvdxdt \right]^{1/r}\\
 \leq C \left[\int^T_0 \iint \left( w_0^{\frac{a}{2}-1}w_0 \left|\nabla_v \left(f^{n+1}-f^{\ell+1}\right)\right| \right)^2 dwdyd \tau\right]^{\frac{1}{2}} \leq o^{n, \ell} \end{array}  
$$ 
for $r < 14/3$.
Applying \eqref{Eq8} with $p<7/5$, $q = 2$, $\theta = \frac{\alpha}{2}-1-\epsilon/2$, and $h=w_0^{1+ \epsilon/2} \left| \nabla_vf^{n+1}\right|$, and then using \eqref{Eq4} yields
$$ \begin{array}{llll} 
\left[\int^T_0 \iint \left(v_0^{\frac{\alpha}{2} -1-\epsilon/2}\int^t_0 \iint {\cal G}w_0^{1+ \epsilon/2} \left|\nabla_vf^{n+1}\right| dwdy d \tau \right)^ rdvdxdt\right]^{1/r} \\
 \leq C \left[\int^T_0 \iint \left(w_0^{\frac{\alpha}{2} -1 - \epsilon/2}w_0^{1+ \epsilon/2}\left| \nabla_vf^{n+1}\right| \right)^2 dwdyd \tau\right]^{\frac{1}{2}} \leq C \end{array}
$$
for $r < 14/3$.
Since $\frac{\alpha}{2}-1-\frac{\epsilon}{2} > \frac{a}{2}-1$, \eqref{Eq10} and \eqref{Eq11} now yield for $r < 14/3$
\begin{equation} \label{Eq12}
\int^T_0 \iint\left(v^{\frac{a}{2}-1}_0 \left|f^{n+1}-f^{\ell+1}\right|\right)^r dvdxdt \leq o^{n.\ell}. 
\end{equation}

\noindent Similarly, using \eqref{Eq4} we may show that  for $r < 14/3$
\begin{equation} \label{Eq13} 
\int^T_0 \iint \left (v_0^{\frac{\alpha}{2}-1}\left|f^{n+1}\right|\right)^r dvdxdt \leq C. 
\end{equation} 

To use \eqref{Eq12} we integrate by parts in \eqref{Eq10} to obtain 
\begin{equation} \label{Eq14} 
f^{n+1}-f^{\ell +1} = \int^t_0 \iint \nabla_w {\cal G}\cdot\left(K^nf^{n+1}-K^{\ell}f^{\ell+1}\right) dwdy d \tau 
\end{equation} 
and using (4.14)
\begin{equation} \begin{array}{llll} \label{Eq15} 
\left|K^nf^{n+1}-K^{\ell}f^{\ell+1}\right| \leq \left|\left(E^n,B^n\right) - \left(E^{\ell}, B^{\ell}\right) \right| w_0 \left|f^{n+1}\right|\\
+C w_0^{1+ \epsilon/2} 2^{-n\epsilon/2} \left|f^{n+1}\right|+Cw_0\left|f^{n+1}-f^{\ell+1}\right|\\
\leq o^{n, \ell}w_0^{1+ \epsilon/2}\left|f^{n+1}\right| + Cw_0 \left|f^{n+1}-f^{\ell+1}|\right. .\end{array} 
\end{equation}

Next, we will apply \eqref{Eq9}, but now with $p < 7/6$ and $q < 14/3$.
Note that we may take $p$ close to $7/6$ and $q$ close to $14/3$ to make $r$ close to 
$$\left(\frac{6}{7} + \frac{3}{14} -1 \right)^{-1} = 14.$$
Thus, applying \eqref{Eq9} with $p < 7/6,\ q < 14/3,\ \theta = \frac{a}{2}-2$, and 
$$h=w_0\left|f^{n+1}-f^{\ell+1}\right| $$
and then using \eqref{Eq12} yields
$$\begin{array}{lllll} 
\left[\int^T_0 \iint \left(v_0^{\frac{a}{2}-2}\int^t_0 \iint \left|\nabla_w {\cal G}\right| w_0 \left|f^{n+1}-f^{\ell+1}\right|dwdyd \tau \right)^r dvdxdt\right]^{1/r}\\
\leq C\left[\int^T_0 \iint \left(w_0^{\frac{a}{2}-2}w_0\left|f^{n+1}-f^{\ell+1}\right|\right)^q dwdyd \tau\right]^{1/q} \leq o^{n\ell} \end{array} 
$$
for $r < 14$.
Applying \eqref{Eq9} again with $p < 7/6$, $q < 14/3$, $\theta = \frac{\alpha}{2} - 2 - \frac{\epsilon}{2}$ and $h = w^{1+ \epsilon/2}_0\left|f^{n+1}\right|$ yields 
$$\begin{array}{lll}
\left[\int^T_0 \iint \left(v_0^{\frac{\alpha}{2}-2-\epsilon/2} \int^t_0 \iint \left|\nabla_w {\cal G}\right|w_0^{1+ \epsilon/2}\left|f^{n+1}\right| dwdy d \tau \right)^r dvdxdt\right]^{1/r}\\
\leq\left[\int^T_0 \iint \left(w_0^{\frac{\alpha}{2}-1}\left|f^{n+1}\right|\right)^q dwdyd \tau \right]^{1/q} \leq C \end{array} 
$$
for $r < 14$ by using \eqref{Eq13}.
With these two estimates, \eqref{Eq14} and \eqref{Eq15} yield 
$$\int^T_0 \iint \left(v_0^{\frac{a}{2}-2}\left|f^{n+1}-f^{\ell+1}\right|\right)^r dvdxdt \leq o^{n,\ell}$$
for $r < 14$. Similarly,
$$\int^T_0 \iint \left(v_0^{\frac{\alpha}{2}-2}\left|f^{n+1}\right|\right)^r dvdxdt \leq C.$$

Finally, we can apply \eqref{Eq9} with $p = \frac{7}{6.4}, q=\frac{14}{1.2}$, and $r= \infty$.  Proceeding as above we obtain 
$$\left|\left| v_0^{\frac{a}{2}-3}\left (f^{n+1}-f^{\ell+1}\right) \right|\right|_{L^{\infty}} \leq o^{n,\ell}.$$
Recalling 
$$f= \underset{n \rightarrow \infty}{\lim} f^n$$ 
it follows that 
$$\left|\left| v_0^{\frac{a}{2}-3}\left(f^n-f\right) \right|\right|_{L^{\infty}} \rightarrow 0 \ \ {\rm as} \ \ n \rightarrow \infty. $$

Next, we bound $\nabla_vf^n$ in $L^{\infty}$.  We have 
$$f^{n+1} = H - \int^t_0 \iint {\cal G} K^n \cdot \nabla_vf^{n+1} dwdy d \tau$$ 
so 
\begin{equation} \label{Eq16}
\begin{array}{lllll}
\left|\nabla_v\left(f^{n+1}-H\right)\right| = \left|\int^t_0 \iint \nabla_v {\cal G}K^n \cdot \nabla_vf^{n+1} dwdy d \tau\right|\\
\leq C \int^t_0 \iint \left| \nabla_w {\cal G}\right| w_0 \left|\nabla_vf^{n+1}\right| dwdy d \tau. \end{array} 
\end{equation} 
Applying \eqref{Eq9} with $p < 7/6,\ q = 2,\ \theta = \frac{\alpha}{2}-1$, and $h = w_0\left|\nabla_vf^{n+1}\right|$, and then using \eqref{Eq4} yields
$$\begin{array}{lllll} 
\left[\int^T_0 \iint \left( v_0^{\frac{\alpha}{2}-1}\int^t_0 \iint \left|\nabla_w{\cal G}\right| w_0 \left|\nabla_vf^{n+1}\right|dwdyd \tau \right)^r dvdxdt \right]^{1/r}\\
\leq \left[\int^T_0 \iint \left(w_0^{\frac{\alpha}{2}-1}w_0 \left|\nabla_vf^{n+1}\right|\right)^2 dwdy d \tau \right]^{1/2} \leq C. \end{array} 
$$
Note that taking $p$ close to $7/6$ yields $r$ close to 
$$\left(\frac{6}{7} + \frac{1}{2} -1 \right)^{-1} = 14/5. $$ 
Hence, using \eqref{Eq16} we find for $r < 14/5$
$$\int^T_0 \iint \left(v_0^{\frac{\alpha}{2}-1} \left|\nabla_vf^{n+1}\right|\right)^r dvdxdt \leq C. $$
We then apply \eqref{Eq9} three more times.  In each application we take $h = w_0 \left|\nabla_vf^{n+1}\right|$.  
First, using 
$p < \frac{7}{6},\ q < 14/5$, and $\theta = \frac{\alpha}{2}-2$ yields 
$$\int^T_0 \iint \left(v_0^{\frac{\alpha}{2}-2}\left|\nabla_vf^{n+1}\right|\right)^r dvdxdt \leq C$$
for $r < 14/3$.  Using $p < 7/6,\ q < 14/3$, and $\theta = \frac{\alpha}{2}-3$ yields 
$$\int^T_0 \iint \left(v_0^{\frac{\alpha}{2}-3}\left|\nabla_vf^{n+1}\right|\right)^r dvdxdt \leq C$$
for $r < 14$.  Using $p= \frac{7}{6.4},\ q = \frac{14}{1.2},\ r= \infty$, and $\theta = \frac{\alpha}{2} -4$ yields
\begin{equation} \label{Eq17} 
\left|\left| v_0^{\frac{\alpha}{2}-4} \nabla_vf^{n+1}\right|\right|_{L^{\infty}} \leq C. 
\end{equation} 
Recall that $a > 8$ so $\frac{\alpha}{2}-4 = \frac{a+2 + \epsilon}{2}-4 > 1$.  Now $\forall h \in \mathbb{R}^2$
$$\left|f^{n+1}(t,x,v+h)-f^{n+1}(t,x,v)\right| \leq C|h|$$
and so 
$$\left| f(t,x,v+h) - f(t,x,v)\right| \leq C|h|. $$ 

Finally, we show that $f$ is H\"older continuous in $x$.  Let $h > 0$ and 
$$e = f^{n+1}(t,x+h,v) -f^{n+1}  (t,x,v), $$
then 
$$\begin{array}{lll} 
\partial_te+v_1 \partial_xe+K^n \cdot \nabla_ve-\Delta_ve\\
=- \left(K^n (t,x+h,v)-K^n(t,x,v)\right) \cdot \nabla_v f^{n+1}(t,x+h,v). \end{array} 
$$
Note that 
$$\begin{array}{llll}
\left|K^n(t,x+h,v)-K^n(t,x,v)\right|\\
\leq v_0 \left(\left|E^n(t,x+h)-E^n(t,x)\right|+ \left|B^n (t,x+h)-B^n(t,x)\right|\right) \\
\leq v_0h^{1/2} \left( \sqrt{\int \left(\partial_xE\right)^2dx} + \sqrt{ \int \left(\partial_xB\right)^2 dx}\right) \\
\leq Cv_0h^{1/2}. \end{array} 
$$
Thus, by \eqref{Eq17} we find
$$\left| \partial_te + v_1 \partial_x e+K^n\cdot \nabla_ve- \Delta_v e\right| \leq C v_0h^{1/2}Cv_0^{4-\alpha/2} \leq Ch^{1/2}.$$
By Lemma 2.1
$$\left|e\right| \leq Ch^{1/2}. $$ 
It follows that $f$ is H\"older continuous with exponent $1/2$ in $x$. 
Hence, Theorem II.1 of \cite{16} applies and shows that $f$ has the regularity stated in Theorem 1.1.  The regularity of $E$ and $B$ follows from this.

Finally, suppose that $(F, {\cal E}, {\cal B} )$ is another solution with the same initial value as $(f, E,B)$.  Then, by (\ref{E4.12}), (\ref{E4.13}), and (\ref{E4.16})

\be \begin{array}{rl}
& \dfrac{d}{dt} \diint v^b_0 (f-F)^2 \, dv dx \leq C \diint v^b_0 (f-F)^2 \, dv dx\\
\\
 & + C \dint \left( |E-{\cal E}|^2 + (B-{\cal B})^2\right) \dint F^2 v^{b+2}_0 \, dv dx \\
 \\
 \leq & C \diint v^b_0 (f - F)^2 dv dx  + C \dint \left(|E-{\cal E}|^2+(B-{\cal B})^2 \right)\, dx.
 \end{array}
 \label{E4.30}
 \ee

 \noindent Also

 $$\dfrac{d}{dt} \dint \left( |E-{\cal E}|^2 + (B - {\cal B})^2 \right) dx = -2 \dint (E-{\cal E}) \cdot \dint (f-F) \, v\, dvdx.
 $$

 \noindent Since

 $$
 \begin{array}{rcl}
 \left( \dint |f-F|v_0\, dv\right)^2 & \leq & \dint(f-F)^2 v^b_0 dv \dint v^{2-b}_0 dv \\
 \\
 & \leq & C \dint (f-F)^2 v^b_0 dv
 \end{array}
 $$

 \noindent we have

 \be
 \begin{array}{rcl}
 \dfrac{d}{dt} \dint \left( | E-{\cal E}|^2 - |B-{\cal B}|^2 \right)\, dx & \leq & C \sqrt{\dint|E-{\cal E}|^2\, dx}\ \sqrt{\diint(f-F)^2 v^b_0 \, dv dx}\\
 \\
 & \leq & C \dint |E-{\cal E}|^2 dx + C \diint (f-F)^2 v^b_0 \, dv dx.
 \end{array}
 \label{E4.31}
 \ee

 Uniqueness follows from (\ref{E4.30}) and (\ref{E4.31}) and the proof is complete.


\begin{thebibliography}{99}
 \bibitem[1]{1} Chae,  M., {\it The global classical solution of the Vlasov-Maxwell-Fokker-Planck system near Maxwellian},  Math. Models Methods Appl. Sci. 21, 5, 1007-1025, 2011.

 \bibitem[2]{2} Degond, P., {\it Global existence of smooth solutions for the Vlasov-Fokker-Planck equation in 1 and 2 space dimensions}, Ann. Sci. \^{E}cole Norm. Sup. 4, 19, 4, 519-542, 1986.

 \bibitem[3]{3} DiPerna, R. J., and Lions, P.-L., {\it Global weak solutions of Vlasov-Maxwell systems}, Comm. Pure Appl. Math. 42, 6, 729-757, 1989.

 \bibitem[4]{4} Felix, J., Calogero, S., and Pankavich, S., {\it Spatially homogeneous solutions of the Vlasov-Nordstrom-Fokker-Planck System}, J. Differential Equations  (to appear), 2014.

\bibitem[5]{5} Glassey, R., and Schaeffer, J., {\it On the ``one and on-half dimensional" relativistic Vlasov-Maxwell system}, Math. Methods Appl. Sci. 13,2, 169-179, 1990.

\bibitem[6]{6} Glassey, R. T., {\it The Cauchy problem in kinetic theory}, Society for Industrial and Applied Mathematics (SIAM), Philadelphia, PA, 1996.

\bibitem[7]{7} Glassey, R. T., and Strauss, W. A., {\it Singularity formation in a collisionless plasma could occur only at high velocities}, Arch. Rational Mech. Anal. 92, 1, 59-90, 1986.

\bibitem[8]{8} Lai, R., {\it On the one- and one-half dimensional relativistic Vlasov-Fokker-Planck-Maxwell system}, Math. Methods Appl. Sci. 18, 13, 1013-1040, 1995.

\bibitem[9]{9}  Lai, R., {\it One the one-and-one-half-dimensional relativistic Vlasov-Maxwell-Fokker-Planck syste with non-vanishing viscosity}, Math. Methods Appl. Sci. 21, 14, 1287-1296, 1998.

\bibitem[10]{10} Lions, P.-L., and Perthame, B., {\it Propagation of moments and regularity for the 3-dimensional Vlasov-Pisson system}, Invent. Math. 105, 2, 415-430, 1991.

\bibitem[11]{11} Pankavich, S., {\it Global existence for the Vlasov-Poisson  system with steady spatial asymptotics}, Comm. Partial Differential Equations 31, 1-3, 349-370, 2006.

\bibitem[12]{12} Pankavich, S., and Michalowski, N., {\it Global classical solutions of the one and one-half dimensional relativistic Vlasov-Maxwell-Fokker-Planck system}, (submitted), 2013.

\bibitem[13]{13} Pankavich S., and Michalowski, N., {\it A short proof of increased parabolic regularity},  (submitted), 2014.

\bibitem[14]{14} Pfaffelmoser, K., {\it Global classical solutions of the Vlasov-Poisson system in three dimensions for general initial data},  J. Differential Equations 95, 2, 281-303, 1992.

\bibitem[15]{15} Schaeffer, J., {\it Global existence of smooth solutions to the Vlasov-Poisson system in three dimensions}, Comm. Partial Differential Equations 16, 8-9, 1313-1335, 1991.

\bibitem[16]{16} Victory, H. D., and O'Dwyer, B. P., {\it On Classical Solutions of Vlasov-Poisson-Fokker-Planck Systems}, Indiana Univ. Math. J. 39, 1, 105-155, 1990.

\bibitem[17]{17}  Yang, T., and Yu, H., {\it Global classical solutions for the Vlasov-Maxwell-Fokker-Planck system}, SIAM J. Math. Anal. 42, 1, 459-488, 2010.
\end{thebibliography}
\end{document}